\newcommand\norm[1]{\left\|#1\right\|}
\newcommand\abs[1]{\lvert#1\rvert}
\newcommand{\mr}{\mathcal{R}}
\theoremstyle{definition}
\newtheorem{theorem}{Theorem}[section]
\newtheorem{lemma}[theorem]{Lemma}
\newtheorem{definition}[theorem]{Definition}
\theoremstyle{remark}
\newtheorem*{remark}{Remark}
\title{Computational multiscale methods for quasi-gas dynamic equations}
\author{Boris Chetverushkin\thanks{Keldysh Institute of Applied Mathematics (Russian Academy of Sciences), Moscow, Russia}\and Eric Chung\thanks{Department of Mathematics, The Chinese University of Hong Kong, Shatin, Hong Kong}\ , \quad Yalchin Efendiev\thanks{Department of Mathematics and Institute of Scientific Computing, Texas A\&M University, College Station, TX 77843, USA}\ , \quad Sai-Mang Pun\thanks{Department of Mathematics, Texas A\&M University, College Station, TX 77843, USA}\ , \quad and \quad Zecheng Zhang\thanks{Department of Mathematics, Texas A\&M University, College Station, TX 77843, USA}}
\begin{document}
\maketitle
%% -- ---------------------------------------------------------------------
%% -- ---------------------------------------------------------------------
\begin{abstract}
In this paper, we consider the quasi-gas-dynamic (QGD) model 
in a multiscale environment. The model equations can be regarded as
a hyperbolic regularization and are derived from kinetic equations.
So far, the research on QGD models has been focused on problems with constant
coefficients. In this paper, we investigate the QGD model in multiscale
media, which can be used in porous media applications. This multiscale problem
is interesting from a multiscale methodology point of view as the
model problem has a hyperbolic
multiscale term, and designing multiscale methods for hyperbolic equations
is challenging. In the paper, we apply the constraint energy minimizing generalized 
multiscale finite element method (CEM-GMsFEM) combined with 
the leapfrog scheme in time to solve this problem. 
The CEM-GMsFEM provides a flexible and systematical framework to construct crucial multiscale basis functions for approximating the solution to the problem with reduced computational cost. With this approach of spatial discretization, we establish the stability of the fully discretized scheme under a relaxed version of the so-called CFL condition. 
Complete convergence analysis of the proposed method is presented. Numerical results are provided to illustrate and verify the theoretical findings. 
\end{abstract}
%% -- ---------------------------------------------------------------------
%% -- ---------------------------------------------------------------------

%\tableofcontents

\section{Introduction}

%Describe QGD as a model. Many modeling equations

The simulations of complex flows play an important role in
 many applications, such
as porous media, aerodynamics, and so on. There are various model
equations used for
simulation purposes, which vary from kinetic to continuum models,
such as the Navier-Stokes equations. There are several intermediate-scale models
that are successfully used in the literature, which includes
the quasi-gas dynamic (QGD) system of equations. The QGD model has shown
to be effective for various applications.
The QGD model equations are derived from kinetic equations
under the assumption that the distribution function is similar 
to a locally Maxwellian representation. The QGD model has an advantage
that it guarantees the smoothing of the solution at the free path distance.
The QGD equations are extensively described in
the literature \cite{chetverushkin2005kinetic,chetverushkin2019compact,chetverushkin2016application,chetverushkin2018kinetic,chetverushkin2018hyperbolic,lutskii2019compact}.

%We consider simplistic case and setup a mathematical model

In the paper, we consider a simplified 
QGD system involving second derivatives with respect to the time, in addition
to spatial diffusion. In literature, this model has also been used
to regularize purely parabolic equations by adding a hyperbolicity.
This regularization has been employed in designing efficient time
stepping algorithms \cite{chetverushkin2019compact,chetverushkin2018kinetic,chetverushkin2018hyperbolic}. 

%Consider in a multiscale media and its approximation

We consider the QGD model in a multiscale environment. More
precisely, we consider a simplified QGD model (see \eqref{eqn:qgd})
and introduce multiscale coefficients. These coefficients represent
the media properties and spatially vary. The applications of these equations
can be considered in porous media for compressible flows. The heterogeneities
of the coefficients represent the media properties, which can have large 
variations. Our objective in this paper is to make some first steps in 
understanding multiscale systems in these hyperbolic quasi-dynamic systems.

% General literature review for multiscale

In the paper, we would like to solve the QGD model equations on a coarse
grid that is much larger compared to spatial heterogeneities. There are
many methods for coarse-grid approximation. These include
 homogenization-based approaches \cite{cances2015embedded, chen2019homogenize,chen2019homogenization,fu2019edge,le2014msfem,le2014multiscale,salama2017flow}, 
multiscale
finite element methods \cite{hkj12,hw97,jennylt03,jennylt05}, 
generalized multiscale finite element methods \cite{MixedGMsFEM,WaveGMsFEM,chung2018fast,chung2015goal,GMsFEM13,gao2015generalized,chung2016adaptiveJCP}, constraint energy minimizing GMsFEM (CEM-GMsFEM) \cite{chung2018constraint, chung2018constraintmixed}, Nonlocal
Multi-continua Approaches (NLMC) \cite{NLMC},
metric-based upscaling \cite{oz06_1}, heterogeneous multiscale method \cite{abe07,ee03}, localized orthogonal decomposition (LOD) \cite{henning2012localized,maalqvist2014localization}, equation free approaches \cite{rk07,srk05,skr06}, computational continua \cite{fafalis2018computational,fish2010computational,fish2005multiscale}, hierarchical multiscale method \cite{brown2013efficient,hs05,tan2019high}, 
and so on. Some of these approaches, such as homogenization-based
approaches, are designed for problems with scale 
separation. In porous media applications, the spatial heterogeneities
are complex and do not have scale separation. In addition, they 
contain large jumps in the coefficients. As a result, the coarse grid
does not resolve scales and contrast. For these purposes, we have 
introduced a general concept CEM-GMSFEM and NLMC, where multiple basis
functions or continua are designed to solve problems on a coarse grid
\cite{chung2018constraintmixed, NLMC}. 
These approaches require a careful design of multiscale
basis functions. The applications of these methods to hyperbolic
equations are challenging \cite{WaveGMsFEM} due to distant temporal effects.
In this paper, our goal is to design an approach for hyperbolic quasi-dynamic systems.

% Technical description of the method

%The objective of this work is to develop for the quasi-gas dynamic equations 
%a new computational multiscale method based on the idea of constraint energy mi%nimization.  
For spatial discretization, we adopt the idea of CEM-GMsFEM presented in \cite{chung2018constraint} and construct a specific multiscale space for approximating the solution. 
Starting with a well-designed auxiliary space, we construct multiscale basis functions (supported in some oversampling regions) which are minimizers of a class of constraint energy minimization problems. 
%In this work, we apply the CEM-GMsFEM for spatial discretization to capture multiscale features of the underlying problem. 
One of the theoretical benefits of the CEM-GMsFEM is that the convergence of the method can be shown to be independent of the contrast from the heterogeneities; and the error linearly decreases with respect to coarse mesh size if the oversampling parameter is appropriately chosen. Our analysis indicates that a moderate number of oversampling layers, depending logarithmically on the contrast, seems sufficient to archive accurate approximation. 
The present CEM-GMsFEM setting allows flexibly adding additional basis functions based on spectral properties of the differential operators. This enhances the accuracy of the method in the presence of high contrast in the media. 
It is shown that if enough basis functions are selected in each local patch, the convergence of the method can be shown independently of the contrast. 

%a first-order convergence with respect to the coarse mesh size. In a previous work \cite{chung2018constraint}, the authors established this result for diffusion problem by showing the convergence in the global space first; and then proving the exponential decay of the global and multiscale basis \eqref{eqn:exp-decay}. We will use these facts to prove our result. %Two formulations are introduced in the original paper \cite{chung2018constraint}, we will use the relaxed constraint energy minimization form in our proof and numerical experiments. The constraint form should reach the same results. 
For temporal discretization, we use a central finite difference scheme to discretize the first and second order time derivatives in the equation. 
We show that the corresponding fully-discretized scheme is stable under a relaxed version of the CFL condition. 
%has the first order convergence (coarse mesh size) in energy norm and 
In order to prove the stability and convergence of the full discretization, we first establish an inverse inequality in the multiscale finite element space. This result relies on the localized estimate between the global and local multiscale basis functions \cite{chung2018constraint}. 
%the orthogonality in the global multiscale space and the estimate of the norm of the coefficients of a function in terms of the multiscale basis \eqref{alpha_ij}. 
A complete convergence analysis is presented in this work. In particular, the error estimate of semi-discretization is shown in Theorem \ref{semi_boss}. For the complete analysis of the fully-discretized numerical scheme, the main result is summarized in Theorem \ref{boss}. 
%The former one is proved in (\ref{semi_boss}); the latter one relies on the stability, and the second order convergence in time can be proved using the Taylor expansion. The final convergence result is summarized in Theorem \ref{boss}. 
Throughout the part of analysis, we need proper regularity assumptions on the source term and initial conditions. 
Numerical results are provided to illustrate the efficiency of the proposed method and it confirms our theoretical findings. %we also observed the second order convergence in $l_2$ norm of our approach.
%Our proof starts from giving an error bound of the semi-discretized scheme in an energy norm. The elliptic projection is introduced in order to give the error estimate; and hence we estimate the error first, between the exact solution and the elliptic projection in multiscale space; and second, between the elliptic projection and the semi-discretized solution. It should be noted that the former error estimate has been established in \cite{chung2018constraint}. This semi-discretized estimate is proved in Theorem \ref{semi_boss}.

% Organization of the paper
The remainder of the paper is organized as follows. We provide in Section \ref{sec:prelim} the background knowledge of the problem. Next, we introduce the multiscale method and the discretization in Section \ref{sec:method}. In Section \ref{sec:analysis}, we provide the stability estimate of the method and prove the convergence of the proposed method. 
We present the numerical results in Section \ref{sec:numerics}. Finally, we give concluding remarks in Section \ref{sec:conclusion}. 

\section{Preliminaries} \label{sec:prelim}
Consider the quasi-gas dynamics (QGD) model in a polygonal domain $\Omega \subset \mathbb{R}^d$ ($d = 2,3$): 
\begin{eqnarray} \label{eqn:qgd}
\begin{split}
u_t + \alpha u_{tt} -  \nabla \cdot ( \kappa \nabla u) & =  f \quad &\text{in } (0,T] \times \Omega, \\
u|_{t=0} & = u_0 \quad &\text{in } \Omega, \\
u_t|_{t=0} & = v_0 \quad &\text{in } \Omega, \\
u & = 0 \quad &\text{on } \partial \Omega. 
\end{split}
\end{eqnarray}
Here, $u_t$ denotes the time derivative of the function $u$, $\alpha$ is a constant, $\kappa: \Omega \to \mathbb{R}$ is a time-independent high-contrast permeability field such that $0 < \gamma \leq \kappa(x) \leq \beta$ for almost every $x \in \Omega$, $f$ is a source term with suitable regularity, and $T>0$ is the terminal time. 
Further, we assume that the initial conditions $u_0 \in H_0^1(\Omega)$ and $v_0 \in L^2(\Omega)$. 

We clarify the notation used throughout the work. We write $(\cdot,\cdot)$ to denote the inner product in $L^2(D)$ and $\norm{\cdot}$ for the corresponding norm. %We denote $H^1(\Omega)$ the classical Sobolev space with norm $\norm{v}_1^2 := \norm{v}_{H^1(\Omega)}^2 = \norm{v}^2 + \norm{\nabla v}^2$. 
Let $H_0^1(\Omega)$ be the subspace of $H^1(\Omega)$ with functions having a vanishing trace and the corresponding dual space is denoted by $H^{-1}(\Omega)$. Moreover, we write $L^p(0,T;X)$ for the Bochner space with the norm
\begin{eqnarray*}
  \norm{v}_{L^p(0,T;X)} := \left (\displaystyle{\int_0^T \norm{v }_X^p dt} \right)^{1/p}  \quad 1\leq p < \infty, 
%  \norm{v}_{L^\infty(0,T;X)} &:= \sup_{0\leq t\leq T} \norm{v}_X,
\end{eqnarray*}
where $X$ is a Banach space equipped with the norm $\norm{\cdot}_X$. %The notation $v\in H^1(0,T; X)$ is used to denote that both $v$ and $v_t$ are elements of the space $L^2(0,T;X)$.

Instead of the original PDE formulation, we consider the variational formulation corresponding to \eqref{eqn:qgd}: Find $u \in L^2(0,T; H_0^1(\Omega))$ with $u_t \in L^2(0,T; L^2(\Omega))$ and $u_{tt} \in L^2(0,T; L^2(\Omega))$ such that 
\begin{eqnarray} \label{eqn:qgd-var}
(u_t, v) + \alpha \left (u_{tt} ,v \right ) + a(u,v) = (f,v) 
\end{eqnarray}
for all $v \in V:= H_0^1(\Omega)$. 
Here, we define $a(u,v) := \int_\Omega \kappa \nabla u \cdot \nabla v~ dx$ for all $u, v \in V$. %; $(\cdot, \cdot)$ is the standard $L^2$ inner product. 
Employing Galerkin's method and the method of energy estimate, one can show the well-posedness of the variational formulation \eqref{eqn:qgd-var}. See \cite[Chapter 7.2]{evans2010partial} for more details. 

In this research, we apply the constraint energy minimizing generalized multiscale finite element method (CEM-GMsFEM) to approximate the solution of the above QGD model. First, we introduce fine and coarse grids for the computational domain.
Let $\mathcal{T}^H = \{ K_i \}_{i=1}^N$ be a conforming partition of the domain $\Omega$ with mesh size $H>0$ defined by
$$H := \max_{K \in \mathcal{T}^H} \Big(\max_{x, y \in K} \abs{x-y}\Big).$$ 
We refer to this partition as the coarse grid. We denote the total number of coarse elements as $N \in \mathbb{N}^+$. Subordinate to the coarse grid, we define the fine grid partition $\mathcal{T}^h$ (with mesh size $h \ll H$) by refining each coarse element $K \in \mathcal{T}^H$ into a connected union of finer elements. We assume that the refinement above is performed such that $\mathcal{T}^h$ is also a conforming partition of the domain $\Omega$. Denote $N_c$ the number of interior coarse grid nodes of $\mathcal{T}^H$ and we denote $\{ x_i \}_{i=1}^{N_c}$ the collection of interior coarse nodes in the coarse grid. 

\section{Multiscale method} \label{sec:method}
In this section, we outline the framework of CEM-GMsFEM and present the construction of the multiscale space for approximating the solution of the QGD model. We emphasize that the multiscale basis functions and the corresponding space are defined with respect to the coarse grid of the domain. The multiscale method consists of two steps. First, we perform a spectral decomposition and form an auxiliary space. 
Next, we construct a multiscale space for approximating the solution based on the auxiliary space.  We remark that these basis functions are locally supported in some coarse patches formed by some coarse elements. Once the multiscale spaces are ready, one can use leapfrog scheme to discretize time derivatives and solve the resulting fully-discretized problem. 

\subsection{The spectral decomposition}
We present the construction of the auxiliary multiscale basis functions. Let $K_i \in \mathcal{T}^H$ be a coarse block. Define $V(K_i)$ as the restriction of the abstract space $V$ on the coarse element $K_i$. We consider a local spectral problem: Find $\lambda_j^{(i)} \in \mathbb{R}$ and $\phi_j^{(i)} \in V(K_i)$ such that 
\begin{eqnarray} \label{eqn:spectral}
a_i(\phi_j^{(i)}, v) = \lambda_j^{(i)} s_i(\phi_j^{(i)},v ) \quad \text{for all } v \in V(K_i).
\end{eqnarray}
Here, $a_i: V(K_i) \times V(K_i)$ is a symmetric non-negative definite bilinear form and $s_i: V(K_i) \times V(K_i)$ is a symmetric positive definite bilinear form. 
We remark that the above problem is solved on a fine mesh in actual computations. Based on the analysis, we choose 
$$ a_i(v,w) := \int_{K_i} \kappa \nabla v \cdot \nabla w ~ dx, \quad s_i(v,w) := \int_{K_i} \tilde \kappa v w ~ dx , \quad \text{where} \quad \tilde \kappa := \sum_{j=1}^{N_c} \kappa \abs{\nabla \chi_j^{\text{ms}}}^2.$$
The functions $\{ \chi_j^{\text{ms}} \}_{j=1}^{N_c}$ are the standard multiscale finite element basis functions which satisfy the partition of unity property. More precisely, $\chi_j^{\text{ms}}$ is the solution of the following system: 
\begin{eqnarray*}
\nabla \cdot (\kappa \nabla \chi_j^{\text{ms}}) & = 0 \quad &\text{in each } K \subset \omega_j, \\
\chi_j^{\text{ms}} & = g_j \quad &\text{on } \partial K \setminus \partial \omega_j,\\
\chi_j^{\text{ms}} & = 0 \quad &\text{on } \partial \omega_j. 
\end{eqnarray*}
The function $g_j$ is continuous and linear along the boundary of the coarse element. We assume that the eigenvalues $\lambda_j^{(i)}$ are arranged in ascending order and we pick $\ell_i \in \mathbb{N}^+$ corresponding eigenfunctions to construct the local auxiliary space $V_{\text{aux}}^{(i)}:= \text{span} \{ \phi_j^{(i)} : j = 1, \cdots, \ell_i \}$. 
We assume the normalization $s_i\left ( \phi_j^{(i)}, \phi_j^{(i)} \right ) = 1$. 
After that, we define the global auxiliary multiscale space $V_{\text{aux}} := \bigoplus_{i=1}^{N} V_{\text{aux}}^{(i)}$. We remark that the global auxiliary multiscale space is used to construct multiscale basis functions that are orthogonal to the auxiliary space with respect to the weighted $L^2$ inner product $s(\cdot,\cdot)$. 

Note that the bilinear form $s_i(\cdot, \cdot)$ defines an inner product with norm $\norm{\cdot}_{s(K_i)} := \sqrt{s(\cdot,\cdot)}$ in the local auxiliary space $V_{\text{aux}}^{(i)}$. Based on these local inner products and norms, one can naturally define a new inner product and norm for the global auxiliary space $V_{\text{aux}}$ as follows: for all $v, w \in V_{\text{aux}}$, 
\begin{eqnarray}
    s(v,w) := \sum_{i=1}^N s_i(v,w) \quad \text{and} \quad \norm{v}_s := \sqrt{s(v,v)}.
    \label{l2norm}
\end{eqnarray}
The inner product and norm defined above can be extended for the abstract space $V$. 
\begin{comment}
One may define the notion of $\phi$-orthogonality in the space $V$. 
\begin{definition} [$\phi$-orthogonality] Given a function $\phi_j^{(i)} \in V_{\text{aux}}$, the function $\psi \in V$ is said to be $\phi_j^{(i)}$-orthogonal if 
$$ s(\psi, \phi_{j'}^{(i')}) = \delta_{i i'} \delta_{j j'}.$$
Here, $\delta_{ij}$ is the Kronecker delta function. 
\end{definition}

\end{comment}
Note that if $\{ \chi_j^{\text{ms}} \}_{j=1}^{N_c}$ is a set of bilinear partition of unity, then 
$\norm{v}_s \leq H^{-1} \beta^{1/2} \norm{v}$
for any $v \in L^2(\Omega)$. 
In addition, we define $\pi: L^2(\Omega) \to V_{\text{aux}}$ as the projection with respect to the inner product $s(\cdot,\cdot)$ such that 
$$ \pi u = \pi(u) := \sum_{i=1}^N \sum_{j=1}^{\ell_i} s_i (u, \phi_j^{(i)}) \phi_j^{(i)} \quad \text{for all } u \in L^2(\Omega). $$

\subsection{The construction of multiscale basis functions}
In this section, we present the construction of the multiscale basis functions. First, we define an oversampling region for each coarse element. Specifically, given a non-negative integer $m \in \mathbb{N}$ and a (closed) coarse element $K_i$, we define the oversampling region $K_{i,m} \subset \Omega$ such that 
$$ K_{i,m} := \left \{ \begin{array}{lr} 
K_i & \text{if } m = 0, \\
\displaystyle{\bigcup \{ K: K_{i,m-1} \cap K \neq \emptyset \}} & \text{if } m \geq 1.
\end{array} \right .$$

See Figure \ref{fig:mesh} for an illustration of oversampling region. For simplicity, we denote $K_i^+$ the oversampled region $K_{i,m}$ for some nonnegative integer $m$. 

\begin{figure}[ht]
\centering
\includegraphics[width = 2.5in]{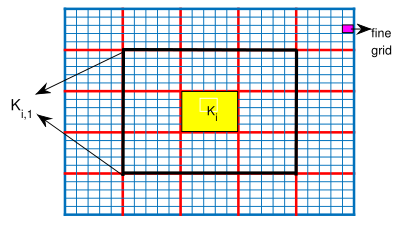}
\caption{Oversampling region with $m = 1$.} 
\label{fig:mesh}
\end{figure}

Recall that $V(K_i^+)$ is the restriction of $V$ on the coarse patch $K_i^+$. Let $V_0(K_i^+)$ be the subspace of $V(K_i^+)$ with zero trace on the boundary $\partial K_i^+$. 
For each eigenfunction $\phi_j^{(i)} \in V_{\text{aux}}$, we define the multiscale basis $\psi_{j,\text{ms}}^{(i)} \in V_0(K_i^+)$ to be the solution of the equation: 
% teleport: variational form of ms basis
\begin{eqnarray}
a(\psi_{j,\text{ms}}^{(i)}, v) + s\left(\pi( \psi_{j,\text{ms}}^{(i)}), \pi(v) \right )  =  s(\phi_j^{(i)},v) \quad \text{for all } v \in V_{0}(K_i^+) \label{eqn:msv}.
\end{eqnarray}
Then, the multiscale space is defined as 
$ V_{\text{ms}} := \text{span} \left \{ \psi_{j,\text{ms}}^{(i)}: i = 1,\cdots, N, ~ j = 1,\cdots, \ell_i \right \}$. 
By construction, we have $\text{dim}(V_{\text{ms}}) = \text{dim}(V_{\text{aux}})$. 

\begin{remark}
The local construction of multiscale basis function $\psi_{j, \text{ms}}^{(i)}$ supported in $K_i^+$ is motivated by the following global construction: Find $\psi_j^{(i)} \in V$ such that 
\begin{eqnarray}
a(\psi_{j}^{(i)}, v) + s\left(\pi( \psi_{j}^{(i)}), \pi(v) \right )  =  s(\phi_j^{(i)},v) \quad \text{for all } v \in V \label{eqn:gmsv}. 
\end{eqnarray}
We then define $V_{\text{glo}} := \text{span} \left \{ \psi_{j}^{(i)}: i = 1,\cdots, N, ~ j = 1,\cdots, \ell_i \right \}$. It has been shown in \cite{chung2018constraint} that the decomposition $V = V_{\text{glo}} \oplus \text{Ker}(\pi)$ holds and this decomposition is orthogonal with respect to the energy bilinear form $a(\cdot,\cdot)$. We will use this property to prove the inverse inequality (Lemma \ref{lem:inver-ineq}) below. 

Using the result of \cite[Lemma 5]{chung2018constraint}, we have the error estimate of localization: For any multiscale function $v_{\text{ms}} = \sum_{i=1}^N \sum_{j=1}^{\ell_i} \alpha_j^{(i)} \psi_{j,\text{ms}}^{(i)} \in V_{\text{ms}}$, there exists a function $v_{\text{glo}} = \sum_{i=1}^N \sum_{j=1}^{\ell_i} \alpha_j^{(i)} \psi_{j}^{(i)} \in V_{\text{glo}}$ such that 
\begin{eqnarray}
\norm{v_{\text{glo}} - v_{\text{ms}}}_a \lesssim (m+1)^{d/2} E^{1/2} \sum_{i=1}^N \sum_{j=1}^{\ell_i} \left ( \alpha_j^{(i)} \right)^2. 
\label{eqn:exp-decay}
\end{eqnarray}
Here, $m$ is the number of oversampling, $E := 3(1+ \Lambda^{-1})\left ( 1+ (2(1+\Lambda^{-1/2})) \right)^{1-m}$ is the factor of exponential decay, and $\Lambda := \displaystyle{\min_{1 \leq i \leq N} \lambda_{\ell_i +1}^{(i)}}$ with $\left \{ \lambda_j^{(i)} \right \}$ being obtained from \eqref{eqn:spectral}. 
\end{remark}
\begin{comment}
Moreover, one can show that 
\begin{eqnarray}
\norm{(\mr - \mr_{\text{ms}}) v_{\text{aux}}} \lesssim \gamma^{-1/2}H(m+1)^{d/2} E^{1/2} \norm{\mr v_{\text{aux}}}_s.
\label{eqn:exp-decay-L2}
\end{eqnarray}
\end{comment}
%{\red These mappings will be used when showing the error of localization.} 

\subsection{The method and discretization}  \label{discretization}
In this section, we discuss the discretizations of the equation \eqref{eqn:qgd-var}. 
Let $u_{\text{ms}} \in V_{\text{ms}}$ be the multiscale approximation to the exact solution $u$. In particular, the function $u_{\text{ms}}$ solves 
\begin{eqnarray} \label{eqn:qgd-var-ms}
\left ( (u_{\text{ms}})_t, v \right) + \alpha \left ( (u_{\text{ms}})_{tt} ,v \right ) + a(u_{\text{ms}},v) = (f,v) \quad \text{for all } v \in V_{\text{ms}}.
\end{eqnarray}
For time discretization, we first partition the temporal domain $(0,T)$ into equally $N_T$ pieces with time step size $\Delta t$. For any function $v = v(t)$, we use the following finite differences to approximate time derivatives appearing in the QGD model: 
$$ v_t \approx \frac{v(t_{n+1}) - v(t_{n-1})}{2\Delta t} =: D_t v^n\quad \text{and} \quad v_{tt} \approx \frac{v(t_{n+1}) - 2v(t_n) + v(t_{n-1})}{(\Delta t)^2} =: D_{tt} v^n.$$
The fully discretization of the equation \eqref{eqn:qgd-var} reads: Find $\mathbf{u}_{H}^{T} := \left ( u_{H}^n \right )_{n=0}^{N_T}$ with $u_{H}^n \in V_{\text{ms}}$ such that for any $n = 1, \cdots, N_{T}-1$, 
\begin{eqnarray} \label{eqn:fully-dis}
\left (D_t u_H^n +  \alpha D_{tt} u_H^n,v \right ) + a(u_{H}^n,v) = (f^n,v) \quad \text{for all } v \in V_{\text{ms}},
\end{eqnarray}
where $f^n := f(t_n)$. 

\begin{comment}
The global basis function $\psi_j^{(i)}$ usually has support over the whole domain and it leads to dense stiffness and mass matrices in actual computations. 
To make the above method practical, we consider the following fully discretization: Find $\mathbf{u}_{H, \text{ms}}^T := (u_{H, \text{ms}}^n )_{n=0}^{N_T}$ with $u_{H, \text{ms}}^n \in V_{\text{ms}}$ such that for any $n = 1,\cdots, N_T -1$, 
\begin{eqnarray} \label{eqn:fully-dis-ms}
\left ( D_t u_{H,\text{ms}}^n  +  \alpha D_{tt} u_{H,\text{ms}}^n ,v \right ) + a(u_{H,\text{ms}}^n,v) = (f^n,v) \quad \text{for all } v \in V_{\text{ms}}. 
\end{eqnarray}
\end{comment}

\section{Convergence analysis} \label{sec:analysis}
In this section, we analyze the convergence of the multiscale method. 
Throughout the work, we denote $a \lesssim b$ if there is a generic constant $C>0$ such that $a \leq C b$. We write $a \lesssim_T b$ if there is a constant $C_T$ depending on $T$ such that $a \leq C_T b$. We denote $\norm{\cdot} = \norm{\cdot}_{L^2(\Omega)}$ and $\norm{\cdot}_a := \sqrt{a(\cdot,\cdot)}$.

\subsection{Semi-discretized scheme} 
\begin{comment}
We consider a semi-discretized scheme for \eqref{eqn:qgd}: find $u \in V$ such that 
\begin{equation} \label{eqn:qgd-var}
(u_t, v) + \alpha (u_{tt} ,v) + a(u,v) = (f,v) \quad \text{for all } v \in V.
\end{equation}
Here, we define $a(u,v) := \int_\Omega \kappa \nabla u \cdot \nabla v~ dx$ for all $u, v \in V$; $(\cdot, \cdot)$ is the standard $L^2$ inner product. 
\end{comment}

We first consider the stability and error estimate in semi-discretization. The following results give a stability estimate for the scheme \eqref{eqn:qgd-var-ms}. 

\begin{lemma} \label{lem:qgd-semi-stable}
Let $u_{\text{ms}} \in V_{\text{ms}}$ be the solution of the equation \eqref{eqn:qgd-var-ms}. Then, 
\begin{eqnarray} \label{eqn:stability_semi}
\alpha \norm{(u_{\text{ms}})_t(T)}^2 + \norm{(u_{\text{ms}})(T)}_a^2  \lesssim   \alpha \norm{v_0}^2 + \norm{u_0}_a^2 + \norm{f}_{L^2(0,T; L^2(\Omega))}^2 .
\end{eqnarray}
\end{lemma}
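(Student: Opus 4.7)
The plan is to run the standard wave-equation energy argument, exploiting the fact that the first-order time term in \eqref{eqn:qgd-var-ms} is dissipative rather than conservative. Since the bilinear form $a(\cdot,\cdot)$ is symmetric and $(u_{\text{ms}})_t(t) \in V_{\text{ms}}$ for each $t$, I would test \eqref{eqn:qgd-var-ms} with $v = (u_{\text{ms}})_t$. This replaces $\alpha((u_{\text{ms}})_{tt}, (u_{\text{ms}})_t)$ by $\tfrac{\alpha}{2}\tfrac{d}{dt}\|(u_{\text{ms}})_t\|^2$ and $a(u_{\text{ms}},(u_{\text{ms}})_t)$ by $\tfrac{1}{2}\tfrac{d}{dt}\|u_{\text{ms}}\|_a^2$, while leaving $\|(u_{\text{ms}})_t\|^2$ as a manifestly non-negative contribution on the left.

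Next I would integrate the resulting identity from $0$ to $T$. After the integration, the left-hand side carries $\tfrac{\alpha}{2}\|(u_{\text{ms}})_t(T)\|^2 + \tfrac{1}{2}\|u_{\text{ms}}(T)\|_a^2 + \int_0^T \|(u_{\text{ms}})_t\|^2 \diff t$, and the right-hand side carries the initial-data terms $\tfrac{\alpha}{2}\|(u_{\text{ms}})_t(0)\|^2 + \tfrac{1}{2}\|u_{\text{ms}}(0)\|_a^2$ together with the forcing $\int_0^T (f,(u_{\text{ms}})_t) \diff t$. To absorb the forcing, I would apply Cauchy–Schwarz followed by Young's inequality with a small parameter $\varepsilon$, giving
\begin{equation*}
\int_0^T (f, (u_{\text{ms}})_t) \diff t \leq \frac{1}{4\varepsilon}\int_0^T \|f\|^2 \diff t + \varepsilon \int_0^T \|(u_{\text{ms}})_t\|^2 \diff t ,
\end{equation*}
choosing $\varepsilon$ small enough (for example $\varepsilon = 1/2$) so that the dissipative term on the left dominates, and the remaining $\|(u_{\text{ms}})_t\|^2$ integral can simply be dropped.

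To close the bound, I would use the fact that the initial values $u_{\text{ms}}(0)$ and $(u_{\text{ms}})_t(0)$ are taken as suitable projections of $u_0$ and $v_0$ onto $V_{\text{ms}}$; in particular the $L^2$-projection for $(u_{\text{ms}})_t(0)$ and an elliptic-type projection for $u_{\text{ms}}(0)$ yield $\|(u_{\text{ms}})_t(0)\| \leq \|v_0\|$ and $\|u_{\text{ms}}(0)\|_a \leq \|u_0\|_a$, so the right-hand side is bounded by the stated data norms. Combining everything gives \eqref{eqn:stability_semi}.

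I do not expect any serious obstacle: because $u_{\text{ms}}$ is already in the conforming subspace $V_{\text{ms}}$, no interpolation or commutation argument is required, and the dissipative $L^2$-term on $(u_{\text{ms}})_t$ actually helps rather than hinders. The only mildly delicate point is the consistent choice of discrete initial data so that the right-hand side is controlled by $\|u_0\|_a$ and $\|v_0\|$ with constants independent of the contrast $\beta/\gamma$; this is handled by invoking stability of the relevant projections onto $V_{\text{ms}}$, which is standard in the CEM-GMsFEM framework.
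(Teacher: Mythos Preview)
Your proposal is correct and follows essentially the same route as the paper: test \eqref{eqn:qgd-var-ms} with $(u_{\text{ms}})_t$, use symmetry of $a(\cdot,\cdot)$ to write the left-hand side as a time derivative of the energy plus the dissipative term $\norm{(u_{\text{ms}})_t}^2$, integrate in time, and absorb the forcing via Cauchy--Schwarz and Young. The paper chooses the Young parameter so that the dissipative integral cancels exactly rather than with margin, and it identifies the initial data of $u_{\text{ms}}$ directly with $u_0,v_0$ without discussing projections, but these are cosmetic differences only.
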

\begin{proof}
Let $v = (u_{\text{ms}})_t$ in \eqref{eqn:qgd-var-ms}. We have 
$$ \norm{(u_{\text{ms}})_t}^2 + \frac{1}{2} \frac{d}{dt}\left ( \alpha \norm{(u_{\text{ms}})_t}^2 + \norm{u_{\text{ms}}}_a^2 \right ) = (f,(u_{\text{ms}})_t)  \leq \norm{f} \cdot \norm{(u_{\text{ms}})_t}.$$
We remark that if $ f \equiv 0$, the scheme is of energy conservation. Integrating over $(0,T)$ leads to 
\begin{equation*}
\begin{split}
2 \int_0^T \norm{(u_{\text{ms}})_t}^2 dt +\alpha \norm{(u_{\text{ms}})_t (T)}^2  + \norm{u_{\text{ms}}(T) }_a^2 & \leq \alpha \norm{v_0}^2 + \norm{u_0}_a^2 + 2 \int_0^T \frac{1}{\sqrt{2}}\norm{f} \cdot \sqrt{2} \norm{(u_{\text{ms}})_t} dt \\
& \leq \alpha \norm{v_0}^2 + \norm{u_0}_a^2 +  \frac{1}{2}  \int_0^T\norm{f}^2 dt + 2 \int_0^T  \norm{(u_{\text{ms}})_t}^2 dt
\end{split}
\end{equation*}
using Cauchy-Schwarz inequality. This completes the proof. 
\end{proof}

%\subsection*{Convergence: semi-discretized scheme}
To estimate the error bound for semi-discretization scheme, we introduce the definition of elliptic projection. 
%Recall that $u \in V$ is the weak solution to the quasi gas-dynamic equation \eqref{eqn:qgd-var}. 
\begin{definition}
For any function $v \in V$, we define the elliptic projection $\widehat{v} \in V_{\text{ms}}$ of the function $v$ such that 
\begin{eqnarray}\label{eqn:elliptic-project}
a(v - \widehat{v}, w ) = 0 \quad \text{for all } w \in V_{\text{ms}}.
\end{eqnarray}
\end{definition}

Next, we analyze the convergence of the proposed multiscale method. For any function $v \in V$, we define the {\it energy functional} $\mathcal{E}: V \to \mathbb{R}$ such that 
$ \mathcal{E}(v) := \sqrt{\alpha} \norm{v_t} + \norm{v}_a$. 
It is not difficult to verify that 
$$ \mathcal{E}( v+w ) = \sqrt{\alpha} \norm{v_t + w_t} + \norm{v+w}_a \leq \sqrt{\alpha} \left ( \norm{v_t} + \norm{w_t} \right ) + \norm{v}_a + \norm{w}_a = \mathcal{E}(v) + \mathcal{E}(w)$$
for any $v, w \in V$. That is, the triangle inequality holds for the energy functional. Note that for any $v \in V$, we have 
$$ \left ( \mathcal{E}(v) \right)^2 = \left ( \sqrt{\alpha} \norm{v_t} + \norm{v}_a \right)^2 \lesssim   \alpha \norm{v_t}^2 + \norm{v}_a^2.$$

We have the following error estimate for the semi-discretization of the QGD model. 

\begin{theorem}
Let $u \in V$ be the solution to \eqref{eqn:qgd-var} and $u_{\text{ms}} \in V_{\text{ms}}$ be the multiscale solution to \eqref{eqn:qgd-var-ms}. Assume that the number of oversampling layers $m = O(\log (\beta \gamma ^{-1} H^{-1}))$ and $\{\chi_j^{\text{ms}} \}_{j=1}^{N_c}$ are bilinear partition of unity. 
Then, for any $t\in (0,T]$, the following error estimate holds 
\begin{eqnarray}
\norm{u(t) - u_{\text{ms}}(t)}_a \lesssim_T H \Lambda^{-1/2},
\label{eqn:semi_dis}
\end{eqnarray}
where $\Lambda = \displaystyle{\min_{1 \leq i \leq N} \lambda_{\ell_i +1}^{(i)}}$ and $\{ \lambda_j^{(i)} \}$ are the eigenvalues obtained by solving \eqref{eqn:spectral}. 
\label{semi_boss}
\end{theorem}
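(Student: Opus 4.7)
The plan is to split the error via the CEM-GMsFEM elliptic projection $\widehat{u}$ defined in \eqref{eqn:elliptic-project}, writing
$$ u - u_{\text{ms}} = (u - \widehat{u}) + (\widehat{u} - u_{\text{ms}}) =: \rho + \theta, $$
where $\theta(t) \in V_{\text{ms}}$. First I would control $\rho$ and its time derivatives. Since $\kappa$ is independent of $t$, the elliptic projection commutes with $\partial_t$, so $\rho_t = u_t - \widehat{u_t}$ and $\rho_{tt} = u_{tt} - \widehat{u_{tt}}$. Rearranging \eqref{eqn:qgd} gives $-\nabla\!\cdot\!(\kappa\nabla u) = f - u_t - \alpha u_{tt}$, and differentiating in time gives analogous right-hand sides for $u_t$ and $u_{tt}$. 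Invoking the standard CEM-GMsFEM elliptic projection estimate (with $m = O(\log(\beta\gamma^{-1}H^{-1}))$ oversampling layers, which absorbs the exponential decay factor $E^{1/2}$ from \eqref{eqn:exp-decay} into the leading constant) yields bounds of the form
$$ \|\rho(t)\|_a + \|\rho_t(t)\|_a + \|\rho_{tt}(t)\|_a \lesssim H\Lambda^{-1/2}\, \mathcal{R}(u,t), $$
where $\mathcal{R}(u,t)$ is an expression in $f$, $u_t$, $u_{tt}$, $u_{ttt}$ that is finite under the stated regularity assumptions.

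Next I would derive an equation for $\theta$. Subtracting \eqref{eqn:qgd-var-ms} from \eqref{eqn:qgd-var} restricted to test functions $v \in V_{\text{ms}}$, and using $a(\rho,v)=0$ by definition of the elliptic projection, produces
$$ (\theta_t, v) + \alpha(\theta_{tt}, v) + a(\theta, v) = -(\rho_t, v) - \alpha(\rho_{tt}, v) \quad \text{for all } v \in V_{\text{ms}}. $$
Choosing the initial data $u_{\text{ms}}(0) = \widehat{u_0}$ and $(u_{\text{ms}})_t(0) = \widehat{v_0}$ gives $\theta(0) = \theta_t(0) = 0$. Testing with $v = \theta_t$ as in the proof of Lemma \ref{lem:qgd-semi-stable} yields
$$ \|\theta_t\|^2 + \tfrac12 \tfrac{d}{dt}\bigl(\alpha \|\theta_t\|^2 + \|\theta\|_a^2\bigr) \leq \bigl(\|\rho_t\| + \alpha\|\rho_{tt}\|\bigr)\|\theta_t\|. $$
After applying Cauchy--Schwarz and Young's inequality to absorb the $\|\theta_t\|$ factor into the left-hand side, integration over $(0,t)$ and Gr\"onwall's lemma give $\alpha\|\theta_t(t)\|^2 + \|\theta(t)\|_a^2 \lesssim_T \int_0^t(\|\rho_t\|^2 + \|\rho_{tt}\|^2)\,ds$.

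Combining the $\rho$ bound with the $\theta$ bound via the triangle inequality $\|u - u_{\text{ms}}\|_a \leq \|\rho\|_a + \|\theta\|_a$ then delivers the desired $H\Lambda^{-1/2}$ rate, with the $T$-dependence absorbed into $\lesssim_T$. The main obstacle I expect is the interplay between $\rho_t$, $\rho_{tt}$ and the CEM-GMsFEM elliptic projection error: one has to track that the $L^2$-type norm of $\tilde\kappa^{-1/2}(f - u_t - \alpha u_{tt})$ (and its time-differentiated analogues) appearing in the projection error bound is indeed finite and controllable under the stated regularity, so the choice of initial multiscale approximations and the regularity needed on $u_{ttt}$ to bound $\rho_{tt}$ must be made carefully. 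A secondary technical point is ensuring the oversampling parameter $m = O(\log(\beta\gamma^{-1}H^{-1}))$ is large enough so that $(m+1)^{d/2}E^{1/2} \lesssim 1$, which is where the logarithmic dependence on contrast enters.
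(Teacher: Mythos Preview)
Your proposal is correct and follows essentially the same route as the paper: split $u-u_{\text{ms}}=\rho+\theta$ via the elliptic projection \eqref{eqn:elliptic-project}, bound $\rho$ (and its time derivatives) by the CEM-GMsFEM projection estimate, derive the $\theta$-equation using $a(\rho,v)=0$, choose the multiscale initial data so that $\theta(0)=\theta_t(0)=0$, and apply the energy argument of Lemma~\ref{lem:qgd-semi-stable} with right-hand side $-(\rho_t+\alpha\rho_{tt},v)$. The only cosmetic differences are that the paper records the sharper $L^2$ bound $\|\rho_t\|\lesssim H^2\Lambda^{-1}$ (rather than the $H\Lambda^{-1/2}$ energy bound you state) and absorbs $\|\theta_t\|$ directly by Young's inequality without invoking Gr\"onwall, but neither affects the final $O(H\Lambda^{-1/2})$ conclusion.
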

\begin{proof}
Denote $\widehat{u}$ the elliptic projection of the exact solution $u$. We write 
$$ e:= u - u_{\text{ms}} = \underbrace{ u - \widehat{u} }_{=: \rho} + \underbrace{ \widehat{u} - u_{\text{ms}} }_{=: \theta} = \rho + \theta. $$
Denote $\mathcal{F} := f - u_t - \alpha u_{tt}$. Note that the function $\widehat{u}$ satisfies the equation: 
$$ a(\widehat{u} , v) = ( \mathcal{F}, v) \quad \text{for all } v \in V_{\text{ms}}.$$
Using the result of \cite[Lemma 1]{chung2018constraint}, we obtain that 
$$ \norm{\rho}_a = \norm{u - \widehat{u}}_a \lesssim H \Lambda^{-1/2} \norm{\kappa^{-1/2} \mathcal{F}} \quad \text{and} \quad \norm{\rho_t} = \norm{\left (u - \widehat{u}\right)_t} \lesssim H^2 \Lambda^{-1} \norm{\kappa^{-1/2}\mathcal{F}_t}.$$
Therefore, we have 
$$ \mathcal{E}(\rho) \lesssim \sqrt{\alpha} H^2 \Lambda^{-1} \norm{\kappa^{-1/2}\mathcal{F}_t} + H \Lambda^{-1/2} \norm{\kappa^{-1/2} \mathcal{F}} \lesssim H\Lambda^{-1/2}. $$
Next, we analyze the term $\mathcal{E}(\theta)$. Subtracting \eqref{eqn:qgd-var-ms} from \eqref{eqn:qgd-var}, we obtain 
$$ (e_t, v) + \alpha \left (e_{tt}, v \right )+ a(e, v) = 0 \quad \text{for all } v \in V_{\text{ms}}.$$
Note that, by the property of elliptic projection, we have $a(\rho, v) = 0$ for all $v \in V_{\text{ms}}$. 
That is, we have 
$$ (\theta_t, v) + \alpha \left ( \theta_{tt}, v \right) + a(\theta ,v) = \left ( ( \widehat{u} - u)_t + \alpha (\widehat{u} - u)_{tt}, v \right )$$ for all $v \in V_{\text{ms}}$. 
Denote $\mathcal{G} := ( \widehat{u} - u)_t  + \alpha ( \widehat{u} - u)_{tt}$. Let $v = \theta_t \in V_{\text{ms}}$ and use the same technique for proving the stability result \eqref{eqn:stability_semi}, one can show that 
$$ \left ( \mathcal{E} (\theta) \right )^2  \lesssim \alpha \norm{\theta_t (0)}^2 + \norm{\theta (0)}_a^2 + \norm{\mathcal{G}}_{L^2(0,T; L^2(\Omega))}^2.$$
Note that $\theta_t(0)$ and $\theta(0)$ are given by the initial conditions of quasi gas-dynamics equation. If we choose $u_{\text{ms}}(0)$ be such that 
$$ a( u_{\text{ms}}(0), v) = a(u_0, v) \quad \text{for all } v \in V_{\text{ms}},$$
then $\theta_t (0) = \theta (0) = 0$ because of the property of elliptic projection. Therefore, we have 
$$ \mathcal{E}(\theta) \lesssim \norm{\mathcal{G}}_{L^2(0,T; L^2(\Omega))} \lesssim \norm{\rho_t}_{L^2(0,T;L^2(\Omega))} + \alpha \norm{\rho_{tt}}_{L^2(0,T;L^2(\Omega))} \lesssim_T H^2 \Lambda^{-1}.$$
To conclude, we show that 
\begin{eqnarray} \label{eqn:semi-estimate}
%\alpha \norm{(u-u_{\text{ms}})_t} + \norm{u - u_{\text{ms}}}_a \lesssim 
\mathcal{E} ( u - u_{\text{ms}}) \leq \mathcal{E} (\rho) + \mathcal{E}(\theta) \lesssim_T H\Lambda^{-1/2}.
\end{eqnarray}
This completes the proof. 
\end{proof}

\subsection{Fully discretization}
In this section, we analyze the method in fully discretization. 
%To simplify notations, we denote $\gamma := \kappa_{\min}$ and $\beta:= \kappa_{\max}$ the minimum and maximum of the permeability field.
First, we define $\sigma_{\text{aux}} :=\displaystyle{ \max_{1 \leq i \leq N} \left ( \max_{1 \leq j \leq \ell_i} \lambda_j^{(i)}\right)}$. We observe that the inverse inequality (in the multiscale space) holds. To prove the inverse inequality in $V_{\text{ms}}$, we first prove the following lemma.
\begin{lemma}
For any $v_{\text{ms}} = \sum_{i=1}^N \sum_{j=1}^{\ell_i} \alpha_j^{(i)} \psi_{j,\text{ms}}^{(i)} \in V_{\text{ms}}$, the following estimation holds
\begin{eqnarray}
\sum_{i=1}^N \sum_{j=1}^{\ell_i} (\alpha_j^{(i)})^2 \leq (1+D) \norm{v_{\text{ms}}}_s^2, 
\label{alpha_ij}
\end{eqnarray}
where $D$ is a generic constant depending on the value of $\sigma_{\text{aux}}$.
\end{lemma}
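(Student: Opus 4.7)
The plan is to bound $\sum (\alpha_j^{(i)})^2$ by passing through two intermediate objects: the auxiliary-space function $\Phi := \sum_{i,j} \alpha_j^{(i)} \phi_j^{(i)} \in V_{\text{aux}}$, for which $\|\Phi\|_s^2 = \sum (\alpha_j^{(i)})^2$ by the $s$-orthonormality of $\{\phi_j^{(i)}\}$, and the ``global'' counterpart $v_{\text{glo}} := \sum_{i,j} \alpha_j^{(i)} \psi_j^{(i)} \in V_{\text{glo}}$, where each $\psi_j^{(i)}$ is the non-localized basis defined by \eqref{eqn:gmsv}. The strategy is first to bound $\|\Phi\|_s$ by $\|v_{\text{glo}}\|_s$ with a factor involving $\sigma_{\text{aux}}$, and then to bound $\|v_{\text{glo}}\|_s$ by $\|v_{\text{ms}}\|_s$ via the exponential-decay estimate \eqref{eqn:exp-decay}.

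For the first step, I would test \eqref{eqn:gmsv} with $v = \phi_k^{(l)}$ (interpreting $a(\psi, \phi_k^{(l)})$ as the local bilinear form on $K_l$, which is legitimate because $\phi_k^{(l)}$ is supported there) and sum against $\alpha_j^{(i)}$. Using $\pi\phi_k^{(l)} = \phi_k^{(l)}$ and $s(\phi_j^{(i)},\phi_k^{(l)}) = \delta_{ij,kl}$, this produces the identity $\alpha_k^{(l)} = a(v_{\text{glo}},\phi_k^{(l)}) + s(\pi v_{\text{glo}},\phi_k^{(l)})$ for every $(k,l)$, i.e.\ $\Phi = \pi v_{\text{glo}} + R$ where $R := \sum_{k,l} a(v_{\text{glo}},\phi_k^{(l)})\,\phi_k^{(l)} \in V_{\text{aux}}$. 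The crucial simplification now is the spectral identity $a(v,\phi_k^{(l)}) = \lambda_k^{(l)} s(v,\phi_k^{(l)})$ valid for all $v \in V$, which together with $s(v_{\text{glo}},\phi_k^{(l)}) = s(\pi v_{\text{glo}},\phi_k^{(l)})$ (since $v_{\text{glo}}-\pi v_{\text{glo}} \perp_s V_{\text{aux}}$), Parseval's identity in the $s$-orthonormal basis, and $\lambda_k^{(l)} \leq \sigma_{\text{aux}}$, yields $\|R\|_s \leq \sigma_{\text{aux}} \|\pi v_{\text{glo}}\|_s \leq \sigma_{\text{aux}} \|v_{\text{glo}}\|_s$. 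Triangle inequality then delivers $\sqrt{\sum (\alpha_j^{(i)})^2} = \|\Phi\|_s \leq (1+\sigma_{\text{aux}})\|v_{\text{glo}}\|_s$.

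For the second step I would split $\|v_{\text{glo}}\|_s \leq \|v_{\text{ms}}\|_s + \|v_{\text{glo}}-v_{\text{ms}}\|_s$, and control the localization error by chaining the inequality $\|\cdot\|_s \leq H^{-1}\beta^{1/2}\|\cdot\|$ stated after \eqref{l2norm}, a global Poincaré--Friedrichs step $\|\cdot\| \lesssim \gamma^{-1/2}\|\cdot\|_a$, and the exponential-decay estimate \eqref{eqn:exp-decay}, giving $\|v_{\text{glo}}-v_{\text{ms}}\|_s \lesssim H^{-1}(\beta/\gamma)^{1/2}(m+1)^{d/2}E^{1/2}\sqrt{\sum (\alpha_j^{(i)})^2}$. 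Under the standing choice $m = O(\log(\beta\gamma^{-1}H^{-1}))$ this prefactor can be driven below $\tfrac{1}{2(1+\sigma_{\text{aux}})}$; substituting and absorbing $\sqrt{\sum(\alpha_j^{(i)})^2}$ to the left-hand side yields $\sum (\alpha_j^{(i)})^2 \leq 4(1+\sigma_{\text{aux}})^2\|v_{\text{ms}}\|_s^2$, which is the claim with $D = 4(1+\sigma_{\text{aux}})^2 - 1$. The main obstacle is the first step: justifying the identity $\Phi = \pi v_{\text{glo}} + R$ and controlling $\|R\|_s$ by $\sigma_{\text{aux}}\|\pi v_{\text{glo}}\|_s$ requires testing \eqref{eqn:gmsv} against an auxiliary function that is only in $L^2$ and invoking the local spectral identity element by element; a secondary difficulty is calibrating the oversampling so that the decay factor from \eqref{eqn:exp-decay} can be absorbed without worsening the $\sigma_{\text{aux}}$-dependence of $D$.
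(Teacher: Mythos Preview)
Your route is genuinely different from the paper's, and it has a real gap in Step 1.

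The paper never passes through $v_{\text{glo}}$ or the spectral identity. Instead, it tests the \emph{local} problem \eqref{eqn:msv} (the one defining $\psi_{k,\text{ms}}^{(l)}$) with $v=\psi_{j,\text{ms}}^{(i)}\in V$ to obtain
\[
s(\pi v_{\text{ms}},\phi_k^{(l)})=\sum_{i,j}\alpha_j^{(i)}\bigl[a(\psi_{j,\text{ms}}^{(i)},\psi_{k,\text{ms}}^{(l)})+s(\pi\psi_{j,\text{ms}}^{(i)},\pi\psi_{k,\text{ms}}^{(l)})\bigr],
\]
i.e.\ $\mathbf b=A\mathbf c$ with $A$ the Gram matrix of the bilinear form $a(\cdot,\cdot)+s(\pi\cdot,\pi\cdot)$ on the multiscale basis. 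The bound $\|A^{-1}\|_2\le(1+D)^{1/2}$ is then obtained by a coercivity argument: for the auxiliary $\phi=\sum\alpha_j^{(i)}\phi_j^{(i)}$ one invokes \cite[Lemma~2]{chung2018constraint} to produce $z\in V$ with $\pi z=\phi$ and $\|z\|_a^2\le D\|\phi\|_s^2$, tests \eqref{eqn:var_inverse} with $z$, and concludes $\|\mathbf c\|_2^2\le(1+D)\|\mathbf b\|_2^2\le(1+D)\|v_{\text{ms}}\|_s^2$. No detour through $V_{\text{glo}}$, no spectral identity, no exponential decay, and no oversampling hypothesis are needed.

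The gap in your Step~1 is exactly the obstacle you flagged but did not resolve: you cannot test \eqref{eqn:gmsv} with $v=\phi_k^{(l)}$, because $\phi_k^{(l)}$ extended by zero is not in $V=H_0^1(\Omega)$ (the local eigenfunctions have no vanishing trace on interior coarse edges). If one tries to push through by using the strong form $-\nabla\cdot(\kappa\nabla v_{\text{glo}})=\tilde\kappa(\Phi-\pi v_{\text{glo}})$ and integrating by parts on $K_l$, a boundary term $\int_{\partial K_l}\kappa\,\partial_n v_{\text{glo}}\,\phi_k^{(l)}$ appears, so the identity $\alpha_k^{(l)}=(1+\lambda_k^{(l)})\,s(\pi v_{\text{glo}},\phi_k^{(l)})$ you rely on is simply false in general. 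Your spectral identity $a_l(v_{\text{glo}},\phi_k^{(l)})=\lambda_k^{(l)}s(v_{\text{glo}},\phi_k^{(l)})$ is correct (it comes from \eqref{eqn:spectral}), but it does not rescue the argument because the link between $\alpha_k^{(l)}$ and the local $a$-pairing is what fails.

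A second, more structural, issue: your Step~2 requires the oversampling hypothesis $m=O(\log(\beta\gamma^{-1}H^{-1}))$ to absorb the decay factor, but the lemma as stated carries no such assumption; the paper's argument yields \eqref{alpha_ij} for any $m$, with $D$ depending only on $\sigma_{\text{aux}}$.
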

\begin{proof}
Let $v_{\text{ms}} = \sum_{i = 1}^{N}\sum_{j = 1}^{\ell_i}\alpha_j^{(i)}\psi_{j,\text{ms}}^{(i)}\in V_{\text{ms}}$. 
By the variational formulation \eqref{eqn:msv}, for any $\phi_k^{(l)}\in V_{\text{aux}}$, we have
$$
s(\pi v_{\text{ms}}, \phi_{k}^{(l)} ) = \sum_{i = 1}^{N}\sum_{j = 1}^{\ell_i}\alpha_j^{(i)}s(\pi\psi_{j, \text{ms}}^{(i)}, \phi_k^{(l)})    = \sum_{i = 1}^{N}\sum_{j = 1}^{\ell_i}\alpha_j^{(i)}
\left ( s(\pi\psi_{j, \text{ms}}^{(i)}, \pi\psi_{k,\text{ms}}^{(l)}) +a(\psi_{j, \text{ms}}^{(i)}, \psi_{k, \text{ms}}^{(l)})   \right ). 
$$
Denote $b_{lk} = s(\pi v_{\text{ms}}, \phi_k^{(l)})$ and $\textbf{b} = (b_{lk})$, we have
$$
\|\textbf{c}\|_2\leq\|A^{-1}\|_2\cdot\|\textbf{b}\|_2,
$$
where $A\in \mathbb{R}^{p\times p}$ is the matrix representation of the bilinear form 
$$
s(\pi \psi_{j, \text{ms}}^{(i)}, \pi\psi_{k, \text{ms}}^{(l)} )+a(\psi_{j, \text{ms}}^{(i)}, \psi _{k,\text{ms}}^{(l)} )
$$
with $p = \sum_{i = 1}^N \ell_i$ and $ \textbf{c} = \left (\alpha_j^{(i)} \right )\in \mathbb{R}^p$. We then estimate the largest eigenvalue of $A^{-1}$. Define an auxiliary function $\phi := \sum_{i = 1}^{N}\sum_{j = 1}^{\ell_i} \alpha_j^{(i)}\phi_j^{(i)} \in V_{\text{aux}}$ and $\psi_{\text{ms}} \in V_{\text{ms}}$ to be the solution of the following equation: 
\begin{eqnarray}
a(\psi_{\text{ms}}, \omega) +s(\pi\psi_{\text{ms}}, \pi\omega) = s(\phi, \pi\omega) \quad  \text{for all } \omega\in V_{\text{ms}}. 
\label{eqn:var_inverse}
\end{eqnarray}
%and $\psi_{ms} = \sum_{i = 1}^{N}\sum_{j = 1}^{l_i}\alpha_j^{(i)}\psi_{j, ms}^{(i)}$.
On the other hand, by \cite[Lemma 2]{chung2018constraint}, there is a function $z\in V$ such that 
$$\pi z = \phi \quad \text{and}  \quad \|z\|_a^2 \leq D \|\phi\|_s^2. $$ 
Here, $D$ is a generic constant depending on the value of $\sigma_{\text{aux}}$ (cf. \cite[Lemma 2]{chung2018constraint}). 
Taking $\omega = z$ in (\ref{eqn:var_inverse}) and using the fact that $s(\phi, \phi) = \|\textbf{c}\|_2^2$, we have 
\begin{eqnarray*}
\begin{split}
\|\textbf{c}\|_2^2 & = a(\psi_{\text{ms}}, z )+s(\pi \psi_{\text{ms}}, \phi)
\leq \|\psi_{\text{ms}}\|_a\|z\|_a+\|\pi\psi_{\text{ms}}\|_s\|\phi\|_s \\
& \leq (1+D)^{\frac{1}{2}}\|\phi\|_s \left (\|\psi_{\text{ms}}\|_a^2+\|\pi\psi_{\text{ms}}\|_s^2 \right )^{\frac{1}{2}}.
\end{split}
\end{eqnarray*}
This implies that $\|A^{-1}\|_2\leq (1+D)^{\frac{1}{2}}$. It follows that
$ \|\textbf{c}\|_2^2\leq (1+D) \|\textbf{b}\|_2^2\leq (1+D) \|v_{\text{ms}}\|_s^2$.
\end{proof}

\begin{lemma}[Inverse Inequality] \label{lem:inver-ineq}
Assume that $\{ \chi_j^{\text{ms}} \}_{j=1}^{N_c}$ is a set of bilinear partition of unity. For any $v_{\text{ms}} \in V_{\text{ms}}$, there is a constant $C_{\text{inv}}>0$ such that
\begin{eqnarray} \label{eqn:inv-ineq}
\norm{\nabla v_{\text{ms}}} \leq C_{\text{inv}} H^{-1} \norm{v_{\text{ms}}}.
\end{eqnarray}
\label{inverse_ineq}
\end{lemma}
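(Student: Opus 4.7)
My plan is to control $\|\nabla v_{\text{ms}}\|$ by first passing to the energy norm via $\kappa\geq\gamma$, giving $\gamma\|\nabla v_{\text{ms}}\|^2\le \|v_{\text{ms}}\|_a^2$, and then estimating $\|v_{\text{ms}}\|_a$ by combining three ingredients available in the excerpt: the global--local splitting, the coefficient bound \eqref{alpha_ij}, and the crude mass bound $\|v\|_s\le H^{-1}\beta^{1/2}\|v\|$ valid for the bilinear partition of unity. Throughout, I write $v_{\text{ms}}=\sum_{i,j}\alpha_j^{(i)}\psi_{j,\text{ms}}^{(i)}$, $v_{\text{glo}}=\sum_{i,j}\alpha_j^{(i)}\psi_j^{(i)}$, and $\phi=\sum_{i,j}\alpha_j^{(i)}\phi_j^{(i)}\in V_{\text{aux}}$.

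The first step is to bound the energy norm of the global counterpart $v_{\text{glo}}$. Summing the global defining equation \eqref{eqn:gmsv} against $v_{\text{glo}}$ and using that $\phi\in V_{\text{aux}}$ so that $s(\phi,v_{\text{glo}})=s(\phi,\pi v_{\text{glo}})$, I obtain
\begin{equation*}
\|v_{\text{glo}}\|_a^2 + \|\pi v_{\text{glo}}\|_s^2 = s(\phi,\pi v_{\text{glo}}) \le \|\phi\|_s\,\|\pi v_{\text{glo}}\|_s.
\end{equation*}
Absorbing $\|\pi v_{\text{glo}}\|_s$ on the right and using $\|\phi\|_s^2=\sum_{i,j}(\alpha_j^{(i)})^2$ (by the $s$-orthonormality of the $\phi_j^{(i)}$) yields $\|v_{\text{glo}}\|_a^2\le \sum_{i,j}(\alpha_j^{(i)})^2$.

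Next I pass from $v_{\text{glo}}$ to $v_{\text{ms}}$ using the localization estimate \eqref{eqn:exp-decay}, which after a triangle inequality gives
\begin{equation*}
\|v_{\text{ms}}\|_a \;\lesssim\; \bigl(1+(m+1)^{d/2}E^{1/2}\bigr)\Bigl(\sum_{i,j}(\alpha_j^{(i)})^2\Bigr)^{1/2}.
\end{equation*}
Applying the preceding lemma \eqref{alpha_ij} bounds $\sum_{i,j}(\alpha_j^{(i)})^2$ by $(1+D)\|v_{\text{ms}}\|_s^2$, and the partition-of-unity inequality $\|v_{\text{ms}}\|_s\le H^{-1}\beta^{1/2}\|v_{\text{ms}}\|$ converts this into a bound of the form $C H^{-1}\|v_{\text{ms}}\|$. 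Under the oversampling choice $m=O(\log(\beta\gamma^{-1}H^{-1}))$ already in force for Theorem \ref{semi_boss}, the exponential decay factor $(m+1)^{d/2}E^{1/2}$ is $O(1)$, so the prefactor is absorbed into $C_{\text{inv}}$. Combining the chain and reinstating $\gamma^{-1/2}$ gives $\|\nabla v_{\text{ms}}\|\le C_{\text{inv}}H^{-1}\|v_{\text{ms}}\|$.

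The main obstacle is the localization step: one must ensure that the exponential decay from \eqref{eqn:exp-decay} actually compensates for the explicit $H^{-1}$ coming from $\|\cdot\|_s\lesssim H^{-1}\|\cdot\|$ without introducing extra powers of $H^{-1}$, which is why the calibration of $m$ and the use of the global basis (rather than the multiscale basis directly, which only satisfies its variational equation against test functions in $V_0(K_i^+)$) is essential. Everything else is bookkeeping with Cauchy--Schwarz and the orthonormality $s_i(\phi_j^{(i)},\phi_j^{(i)})=1$.
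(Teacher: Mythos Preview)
Your argument is correct and follows the same overall architecture as the paper's proof: split $v_{\text{ms}}$ into its global counterpart $v_{\text{glo}}$ plus the localization error, bound the latter by \eqref{eqn:exp-decay}, control the coefficient vector via \eqref{alpha_ij}, and finish with $\|\cdot\|_s\le H^{-1}\beta^{1/2}\|\cdot\|$. The one genuine difference is in how the global part is handled. The paper invokes the $a$-orthogonal decomposition $V=V_{\text{glo}}\oplus\text{Ker}(\pi)$ to write $a(v_{\text{glo}},v_{\text{glo}})=a(v_{\text{glo}},\pi v_{\text{glo}})$ and then applies the spectral bound $\|\pi v_{\text{glo}}\|_a\le\sigma_{\text{aux}}^{1/2}\|\pi v_{\text{glo}}\|_s$, which brings an explicit factor $\sigma_{\text{aux}}^{1/2}$ into $C_{\text{inv}}$; it then separately derives $\|\pi v_{\text{glo}}\|_s\le\|\phi\|_s$ from \eqref{eqn:gmsv}. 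You instead test the summed equation \eqref{eqn:gmsv} directly against $v_{\text{glo}}$ to obtain $\|v_{\text{glo}}\|_a^2+\|\pi v_{\text{glo}}\|_s^2\le\|\phi\|_s\,\|\pi v_{\text{glo}}\|_s$, which immediately gives $\|v_{\text{glo}}\|_a\le\|\phi\|_s$ without appealing to the orthogonal decomposition or to $\sigma_{\text{aux}}$. Your route is a touch cleaner and yields a slightly sharper constant; the paper's route makes explicit use of the structural decomposition highlighted in the remark after \eqref{eqn:gmsv}. One minor point: you do not need the calibration $m=O(\log(\beta\gamma^{-1}H^{-1}))$ for this lemma, since the statement only asserts the existence of some $C_{\text{inv}}$, and the paper itself leaves the factor $(m+1)^dE$ explicitly inside $C_{\text{inv}}$.
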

\vspace{-1cm}
\begin{proof}
Let $v \in V_{\text{glo}}$. Applying the orthogonality of $V_{\text{glo}}$, we get
\begin{eqnarray*}
\begin{split}
\gamma \norm{\nabla v}^2 & \leq a(v,v) = a(v, \pi v) \leq \norm{v}_a \norm{\pi v}_a \\
& \leq \beta^{1/2} \norm{\nabla v} \sigma_{\text{aux}}^{1/2} \norm{\pi v}_s \\
& \leq \beta \norm{\nabla v} \sigma_{\text{aux}}^{1/2} \norm{\pi v}_s.
\end{split}
\end{eqnarray*}
This implies that $\norm{\nabla v} \leq \gamma^{-1} \beta \sigma_{\text{aux}}^{1/2} \norm{\pi v}_s$ for any $v\in V_{\text{glo}}$.

Next, for any $v_{\text{ms}} = \sum_{i=1}^N \sum_{j=1}^{\ell_i} \alpha_j^{(i)} \psi_{j,\text{ms}}^{(i)}\in V_{\text{ms}}$, let $v = \sum_{i=1}^N \sum_{j=1}^{\ell_i} \alpha_j^{(i)} \psi_{j}^{(i)}\in V_{\text{glo}}$. 
We claim that $ \norm{\pi v}_s \leq \sum_{i=1}^N \sum_{j=1}^{\ell_i} (\alpha_j^{(i)})^2$. Notice that by \eqref{eqn:gmsv}, we have
\begin{eqnarray*}
\begin{split}
\norm{\pi v}_s^2 &= s(\pi v, \pi v)
= \sum_{i=1}^N \sum_{j=1}^{\ell_i} \alpha_j^{(i)}  s(\pi \psi_{j}^{(i)}, \pi v)
=  \sum_{i=1}^N \sum_{j=1}^{\ell_i} \alpha_j^{(i)} \left( s(\phi_{j}^{(i)}, \pi v) - a(\psi_{j}^{(i)},v) \right) \\
& = s(\phi, \pi v) - a (v, v) = s(\phi, \pi v) - \norm{v}_a^2
\end{split}
\end{eqnarray*}
with $\phi := \sum_{i=1}^N \sum_{j=1}^{\ell_i} \alpha_j^{(i)} \phi_j^{(i)}$. 
This implies that 
$$
\norm{\pi v}_s^2 \leq s(\phi, \pi v) \leq \norm{\phi}_s \norm{\pi v}_s \implies \norm{\pi v}_s \leq \norm{\phi}_s = \sum_{i=1}^N \sum_{j=1}^{\ell_i} \left (\alpha_j ^{(i)} \right)^2
$$
using the orthogonality of the auxiliary basis functions. By the inequalities \eqref{eqn:exp-decay} and  \eqref{alpha_ij}, we have 
\begin{eqnarray*}
\begin{split}
    \norm{\nabla v_{\text{ms}}}^2 &\leq \norm{\nabla (v-v_{\text{ms}})}^2+\norm{\nabla v}^2\\
    & \lesssim \gamma^{-1}  (m+1)^dE \sum_{i=1}^N \sum_{j=1}^{\ell_i} \left (\alpha_j^{(i)} \right )^2+
    \gamma^{-1} \beta \sigma_{\text{aux}}^{1/2} \norm{\pi v}_s \\
    & \lesssim   \left ( \gamma^{-1}  (m+1)^d E + \gamma^{-1} \beta \sigma_{\text{aux}}^{1/2} \right ) \sum_{i=1}^N \sum_{j=1}^{\ell_i} \left (\alpha_j^{(i)} \right )^2\\
    & \lesssim   \left ( \gamma^{-1}  (m+1)^d E + \gamma^{-1} \beta \sigma_{\text{aux}}^{1/2} \right )  (1+D) \norm{v_{\text{ms}}}_s^2. %\beta H^{-2} \norm{v_{\text{ms}}}^2 %\lesssim H^{-2} \norm{v_{\text{ms}}}^2. 
\end{split}
\end{eqnarray*}
Using the definition of $s$-norm, this gives that $\norm{\nabla v_{\text{ms}}} \leq C_{\text{inv}} H^{-1}\norm{v_{\text{ms}}}$ holds for any $v_{\text{ms}} \in V_{\text{ms}}$ with $C_{\text{inv}} =  \beta^{1/2}(1+D) \left ( \gamma^{-1}  (m+1)^d E + \gamma^{-1} \beta \sigma_{\text{aux}}^{1/2} \right )$. 
\end{proof}

Recall that $\mathbf{u}_{H}^{T} := \left ( u_H^n \right )_{n=0}^{N_T}$ with $u_H^n \in V_{\text{ms}}$ is the solution to \eqref{eqn:fully-dis}. The following result gives the stability estimate of the fully discretization. 

\begin{lemma} [Stability of the method]
Assume that the CFL condition 
\begin{eqnarray}
\alpha - \frac{1}{2} \beta C_{\text{inv}}^2 H^{-2} (\Delta t)^2 \geq \delta
\label{eqn:cfl}
\end{eqnarray}
holds for some constant $\delta >0$. Then, the fully discretization method \eqref{eqn:fully-dis} is stable; that is, 
\begin{eqnarray} \label{eqn:stability_fully}
\alpha \norm{\frac{u_H^n - u_H^{n-1}}{\Delta t}} + \norm{u_H^n}_a \lesssim \left ( \Delta t \sum_{k=1}^n \norm{f^k} + \alpha \norm{\frac{u_H^1 - u_H^0}{\Delta t}} + \norm{u_H^1}_a + \norm{u_H^0}_a\right).
\end{eqnarray}
\end{lemma}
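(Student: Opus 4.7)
The plan is to carry out a discrete energy argument. Take $v = D_t u_H^n = (u_H^{n+1} - u_H^{n-1})/(2\Delta t) \in V_{\text{ms}}$ in \eqref{eqn:fully-dis} and introduce the shorthand $w^k := (u_H^k - u_H^{k-1})/\Delta t$, so that $D_t u_H^n = (w^{n+1} + w^n)/2$ and $D_{tt} u_H^n = (w^{n+1} - w^n)/\Delta t$. Three elementary identities handle the three terms in the scheme: the first-derivative term yields the nonnegative quantity $\norm{D_t u_H^n}^2$; the second-derivative term telescopes via $(a-b,a+b)=\norm{a}^2-\norm{b}^2$ as
\begin{equation*}
\alpha (D_{tt}u_H^n, D_t u_H^n) = \frac{\alpha}{2\Delta t}\left(\norm{w^{n+1}}^2 - \norm{w^n}^2\right);
\end{equation*}
and the stiffness term, after writing $u_H^n = \tfrac{1}{2}(u_H^{n+1}+u_H^{n-1}) - \tfrac{1}{2}(u_H^{n+1}-2u_H^n+u_H^{n-1})$ and exploiting the symmetry of $a(\cdot,\cdot)$, becomes
\begin{equation*}
a(u_H^n, D_t u_H^n) = \frac{1}{4\Delta t}\left(\norm{u_H^{n+1}}_a^2 - \norm{u_H^{n-1}}_a^2\right) - \frac{\Delta t}{4}\left(\norm{w^{n+1}}_a^2 - \norm{w^n}_a^2\right).
\end{equation*}

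Next I would define the modified discrete energy
\begin{equation*}
E^{n+1/2} := 2\alpha\norm{w^{n+1}}^2 + \norm{u_H^{n+1}}_a^2 + \norm{u_H^n}_a^2 - (\Delta t)^2\norm{w^{n+1}}_a^2,
\end{equation*}
so that the preceding identities combine into the clean telescoping relation $E^{n+1/2} - E^{n-1/2} + 4\Delta t\norm{D_t u_H^n}^2 = 4\Delta t(f^n, D_t u_H^n)$. The critical step is to establish coercivity of $E^{n+1/2}$. Since $w^{n+1}\in V_{\text{ms}}$, the inverse inequality from Lemma \ref{lem:inver-ineq} gives $(\Delta t)^2\norm{w^{n+1}}_a^2 \leq \beta C_{\text{inv}}^2 H^{-2}(\Delta t)^2\norm{w^{n+1}}^2$, and combining with the CFL condition \eqref{eqn:cfl} yields
\begin{equation*}
E^{n+1/2} \geq 2\delta\norm{w^{n+1}}^2 + \norm{u_H^{n+1}}_a^2 + \norm{u_H^n}_a^2.
\end{equation*}

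To conclude, sum the energy identity over $n=1,\dots,M$, discard the nonnegative $4\Delta t\norm{D_t u_H^n}^2$ contributions, and bound the right-hand side using Cauchy--Schwarz together with $\norm{D_t u_H^n} \leq \tfrac{1}{2}(\norm{w^{n+1}} + \norm{w^n})$. Letting $E_\ast := \max_{1\leq k\leq N_T} E^{k+1/2}$ and invoking the coercivity bound to dominate each $\norm{w^k}$ by $\sqrt{E_\ast/(2\delta)}$ produces a scalar quadratic inequality of the form $E_\ast \leq E^{1/2} + C(\delta)\sqrt{E_\ast}\,\Delta t\sum_{k=1}^{N_T}\norm{f^k}$. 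Solving in $\sqrt{E_\ast}$ gives $\sqrt{E_\ast} \lesssim \sqrt{E^{1/2}} + \Delta t\sum_k\norm{f^k}$, and noting that $\sqrt{E^{1/2}} \lesssim \sqrt{\alpha}\norm{w^1} + \norm{u_H^1}_a + \norm{u_H^0}_a$ (we simply drop the negative $-(\Delta t)^2\norm{w^1}_a^2$ term) and absorbing the $\alpha$- and $\delta$-dependent constants into $\lesssim$ delivers \eqref{eqn:stability_fully}.

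The main obstacle is the indefinite correction term $-(\Delta t)^2\norm{w^{n+1}}_a^2$ inside $E^{n+1/2}$; it is the necessary price for making the stiffness contribution telescope, and without control on it the discrete energy is not a priori positive. This is why some CFL-type restriction is unavoidable. The relaxation compared with the classical fine-grid CFL condition comes precisely from the fact that the inverse inequality in Lemma \ref{lem:inver-ineq} scales with the coarse mesh size $H^{-1}$ rather than the fine mesh size $h^{-1}$, so the admissible time step is dictated by the coarse scale $H$ alone.
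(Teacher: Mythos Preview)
Your proof is correct and essentially identical to the paper's: the same test function $u_H^{n+1}-u_H^{n-1}$ (up to your normalization by $2\Delta t$), and the same discrete energy --- your $E^{n+1/2}$ is exactly $4\mathcal{E}_{n+1,H}$ once one expands the cross term $a(u_H^n,u_H^{n+1}) = \tfrac{1}{2}\norm{u_H^{n+1}}_a^2 + \tfrac{1}{2}\norm{u_H^n}_a^2 - \tfrac{(\Delta t)^2}{2}\norm{w^{n+1}}_a^2$ in the paper's definition. The only cosmetic difference is the closing step: the paper uses the pointwise bound $\sqrt{\mathcal{E}_{n+1,H}}-\sqrt{\mathcal{E}_{n,H}} \leq C\Delta t\norm{f^n}$ and sums, whereas you sum first and solve a quadratic in $\sqrt{E_\ast}$; both are standard and yield the same estimate.
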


\begin{proof}
Let $v = u_H^{n+1} - u_H^{n-1}$ in \eqref{eqn:fully-dis}. We have 
\begin{eqnarray*}
\begin{split}
\frac{1}{2\Delta t} \norm{u_H^{n+1} - u_H^{n-1}}^2 + \frac{\alpha}{(\Delta t)^2} \left ( u_H^{n+1} - u_H^n - (u_H^n - u_H^{n-1}), u_H^{n+1} - u_H^n + u_H^n - u_H^{n-1} \right) \\+ a ( u_H^n , u_H^{n+1} - u_H^{n-1}) = \Delta t \left (f^n,\frac{u_H^{n+1} - u_H^{n-1}}{\Delta t}\right). 
\end{split}
\end{eqnarray*}
Define $\mathcal{E}_{n,H} := \displaystyle{\frac{1}{2} \left ( \alpha \norm{\frac{u_H^{n} - u_H^{n-1}}{\Delta t}}^2 + a(u_H^{n-1}, u_H^n) \right )}$. 
It implies that 
$$ \alpha \left ( \norm{\frac{u_H^{n+1} - u_H^n}{\Delta t}}^2 -  \norm{\frac{u_H^{n} - u_H^{n-1}}{\Delta t}}^2\right ) + a(u_H^n , u_H^{n+1} ) - a(u_H^{n-1}, u_H^n) \leq (f^n , u_H^{n+1} - u_H^{n-1})$$
$$ \implies \mathcal{E}_{n+1,H} \leq \mathcal{E}_{n,H} + \frac{1}{2} (f^n , u_H^{n+1} - u_H^{n-1}).$$

\begin{comment}
One can show that
$$\mathcal{E}_n \geq  \frac{1}{4} \left (  \norm{u^{n}}_a^2 + \norm{u^{n-1}}_a^2 \right)+ \frac{\delta}{2} \norm{\frac{u^n - u^{n-1}}{\Delta t}}^2.$$
under the CFL condition: for some $\delta >0$ it follows that 
$$\alpha - \frac{1}{2} \beta C_{\text{inv}}^2 H^{-2} (\Delta t)^2 \geq \delta.$$ 
\end{comment}

Note that 
\begin{equation*}
\begin{split}
\mathcal{E}_{n,H} & = \frac{1}{2} \left ( \alpha \norm{\frac{u_H^n - u_H^{n-1}}{\Delta t}}^2 + a(u_H^n , u_H^{n-1}) \right ) \\
& = \frac{\alpha}{2} \norm{\frac{u_H^n - u_H^{n-1}}{\Delta t}}^2 + \frac{1}{4}  a(u_H^n, u_H^n) + \frac{1}{4} a(u_H^{n-1}, u_H^{n-1}) - \frac{1}{4} a(u_H^n - u_H^{n-1}, u_H^n - u_H^{n-1}) \\
& \geq \frac{\alpha}{2} \norm{\frac{u_H^n - u_H^{n-1}}{\Delta t}}^2 + \frac{1}{4}  a(u_H^n, u_H^n) + \frac{1}{4} a(u_H^{n-1}, u_H^{n-1}) - \frac{1}{4} \beta \norm{\nabla (u_H^n - u_H^{n-1})}^2 \\
& \geq \frac{\alpha}{2} \norm{\frac{u_H^n - u_H^{n-1}}{\Delta t}}^2 + \frac{1}{4}  a(u_H^n, u_H^n) + \frac{1}{4} a(u_H^{n-1}, u_H^{n-1}) - \frac{1}{4} \beta C_{\text{inv}}^2 H^{-2} (\Delta t)^2 \norm{\frac{u_H^n - u_H^{n-1}}{\Delta t}}^2\\
& = \frac{1}{2} \left ( \alpha  - \frac{1}{2} \beta C_{\text{inv}}^2 H^{-2} (\Delta t)^2 \right ) \norm{\frac{u_H^n - u_H^{n-1}}{\Delta t}}^2 + \frac{1}{4} \left ( \norm{u_H^n}_a^2 + \norm{u_H^{n-1}}_a^2 \right ). 
\end{split}
\end{equation*}

Then, we have 
\begin{eqnarray*}
\begin{split}
\mathcal{E}_{n+1,H}-  \mathcal{E}_{n,H}  & \leq \frac{1}{2} (f^n, u_H^{n+1} - u_H^{n-1}) \leq \frac{1}{2} \Delta t\norm{f^n} \left ( \norm{\frac{u_H^{n+1} - u_H^{n}}{\Delta t}} + \norm{\frac{u_H^{n} - u_H^{n-1}}{\Delta t}}\right ) \\
& \leq \frac{1}{2} \Delta t \norm{f^n} \cdot \sqrt{\frac{2}{\delta}} \left ( \sqrt{\mathcal{E}_{n+1,H}} + \sqrt{\mathcal{E}_{n,H}} \right ), \\
\sqrt{\mathcal{E}_{n+1,H}} - \sqrt{\mathcal{E}_{n,H}} & \leq \frac{1}{\sqrt{2\delta}} \Delta t \norm{f^n} \implies \sqrt{ \mathcal{E}_{n,H}}  \leq \sqrt{\mathcal{E}_{0,H}} + \frac{\Delta t}{\sqrt{2\delta}} \sum_{k=1}^n \norm{f^k}.
\end{split}
\end{eqnarray*}

This implies that
\begin{equation*}
\alpha \norm{\frac{u_H^n - u_H^{n-1}}{\Delta t}} + \norm{u_H^n}_a \lesssim  \left ( \Delta t \sum_{k=1}^n \norm{f^k} + \alpha \norm{\frac{u_H^1 - u_H^0}{\Delta t}} + \norm{u_H^1}_a + \norm{u_H^0}_a\right).
\end{equation*}
This completes the proof. 
\end{proof}

Recall that $u \in V$ is the solution of \eqref{eqn:qgd-var}. The total error between $\mathbf{u} := \left ( u(t_n) \right )_{n=0}^{N_T}$ and $\mathbf{u}_{H}^T$ can be split into two parts: the spatial discretization error $u(t_n) - u_{\text{ms}}(t_n)$ and the time discretization error $u_{\text{ms}}(t_n) - u_H^n$. Here, $u_{\text{ms}} \in V_{\text{ms}}$ is the solution of \eqref{eqn:qgd-var-ms}. Using the result of \eqref{eqn:semi-estimate}, we have 
$$ \norm{u(t_n) - u_{\text{ms}}(t_n)}_a \lesssim_T H\Lambda^{-1/2}.$$
Next, we estimate the time discretization error. Let $\widetilde e_n := u_{\text{ms}}^n- u_H^n$ with $u_{\text{ms}}^n := u_{\text{ms}}(t_n)$. Subtracting \eqref{eqn:qgd-var-ms} from \eqref{eqn:fully-dis}, we obtain 
$$ \left ( \frac{\widetilde e_{n+1} - \widetilde e_{n-1}}{2 \Delta t}, v\right ) + \alpha \left ( \frac{\widetilde e_{n+1} - 2\widetilde e_n + \widetilde e_{n-1}}{(\Delta t)^2}, v \right ) + a(\widetilde e_n, v) = \left ( \mathcal{H}^n ,v \right ) \quad \text{for all } v \in V_{\text{ms}},$$
where 
$$ \mathcal{H}^n := (u_{\text{ms}})_t + \alpha (u_{\text{ms}})_{tt} - \frac{u_{\text{ms}}^{n+1} - u_{\text{ms}}^{n-1}}{2\Delta t} - \alpha \frac{u_{\text{ms}}^{n+1} - 2u_{\text{ms}}^n + u_{\text{ms}}^{n-1}}{(\Delta t)^2}.$$
Using the result of \eqref{eqn:stability_fully}, one can obtain 
\begin{equation} \label{eqn:fully-conv}
\begin{split}
\alpha \norm{\frac{\widetilde e_{n+1} - \widetilde e_n}{\Delta t}} + \norm{\widetilde e_n}_a & \lesssim \alpha \norm{\frac{\widetilde e_1 - \widetilde e_0}{\Delta t}} + \norm{\widetilde e_1}_a + \Delta t \sum_{k=1}^n \left \{ \norm{(u_{\text{ms}})_t - \frac{u_{\text{ms}}^{k+1} - u_{\text{ms}}^{k-1}}{2\Delta t}} \right . \\
& \quad \left . + \alpha \norm{(u_{\text{ms}})_{tt} - \frac{u_{\text{ms}}^{k+1} - 2u_{\text{ms}}^k + u_{\text{ms}}^{k-1}}{(\Delta t)^2}} \right \}.
\end{split}
\end{equation}
Under the assumption of some additional regularity and appropriate initial conditions, the right-hand side of \eqref{eqn:fully-conv} scales like $H + (\Delta t)^2$.

Finally, we have the error estimate for the fully discretization scheme. 

\begin{theorem}
Assume that $u$, $u_{\text{ms}}$, and $f$ are smooth enough with respect to the variable $t$. Let $\widetilde u_{H} (t)$ be the piecewise linear function that interpolates $\mathbf{u}_{H}^T$ in time. Then 
$$ \norm{u - \widetilde u_{H,\text{ms}}}_{L^2(0,T; a)} \lesssim_T H + (\Delta t)^2, \quad \text{where } \norm{\cdot}_{L^2(0,T;a)} := \left ( \int_0^T \norm{\cdot}_a^2 dt \right )^{1/2}.$$
\label{boss}
\end{theorem}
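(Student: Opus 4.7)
\textbf{Proof proposal for Theorem \ref{boss}.} The plan is to split the total error by the triangle inequality into a spatial (semi-discretization) part and a temporal part, bounding each separately and then combining. Concretely, I would write
\begin{equation*}
\norm{u - \widetilde u_{H}}_{L^2(0,T;a)} \leq \norm{u - u_{\text{ms}}}_{L^2(0,T;a)} + \norm{u_{\text{ms}} - \widetilde u_{H}}_{L^2(0,T;a)},
\end{equation*}
where $u_{\text{ms}} \in V_{\text{ms}}$ is the semi-discrete solution of \eqref{eqn:qgd-var-ms}. The first term is controlled directly by Theorem \ref{semi_boss}: the pointwise bound $\norm{u(t) - u_{\text{ms}}(t)}_a \lesssim_T H\Lambda^{-1/2}$ integrates trivially in time over $(0,T)$, and under the prescribed choice $m = O(\log(\beta \gamma^{-1} H^{-1}))$ the factor $\Lambda^{-1/2}$ is harmless, so this contribution scales like $H$.

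For the temporal part I would introduce the piecewise linear interpolant $\Pi u_{\text{ms}}$ in time of the nodal values $\{u_{\text{ms}}(t_n)\}$ and split once more:
\begin{equation*}
u_{\text{ms}} - \widetilde u_H = (u_{\text{ms}} - \Pi u_{\text{ms}}) + \Pi(u_{\text{ms}} - u_H).
\end{equation*}
The first piece is a standard linear interpolation error in time; assuming $u_{\text{ms}}$ is sufficiently smooth in $t$ with values in $V$ (which is essentially the smoothness hypothesis of the theorem), it is bounded in $L^2(0,T;a)$ by $(\Delta t)^2 \norm{(u_{\text{ms}})_{tt}}_{L^2(0,T;a)}$. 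For the second piece, since $\Pi$ is a nodal interpolant, its $L^2(0,T;a)$ norm is controlled by $\sqrt{T}\,\max_{0\leq n\leq N_T} \norm{\widetilde e_n}_a$ where $\widetilde e_n := u_{\text{ms}}(t_n) - u_H^n$. This reduces the problem to bounding $\max_n \norm{\widetilde e_n}_a$.

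To bound $\widetilde e_n$, I would use the already-derived estimate \eqref{eqn:fully-conv}, which was obtained by applying the stability bound \eqref{eqn:stability_fully} to the error equation with right-hand side given by the finite-difference truncation terms $\mathcal{H}^n$. Standard Taylor expansions give
\begin{equation*}
\norm{(u_{\text{ms}})_t(t_n) - D_t u_{\text{ms}}^n} \lesssim (\Delta t)^2 \norm{(u_{\text{ms}})_{ttt}}_{L^\infty(0,T;L^2)}, \qquad \norm{(u_{\text{ms}})_{tt}(t_n) - D_{tt} u_{\text{ms}}^n} \lesssim (\Delta t)^2 \norm{(u_{\text{ms}})_{tttt}}_{L^\infty(0,T;L^2)},
\end{equation*}
so that the time-cumulative term $\Delta t \sum_{k=1}^{n}$ in \eqref{eqn:fully-conv} contributes at most $T(\Delta t)^2$. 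The initial-data contribution $\alpha \norm{(\widetilde e_1 - \widetilde e_0)/\Delta t} + \norm{\widetilde e_1}_a$ is made of order $(\Delta t)^2$ by choosing $u_H^0$ as the elliptic-projection-based approximant used in the proof of Theorem \ref{semi_boss} (so $\widetilde e_0 = 0$) and $u_H^1$ by a second-order Taylor start-up step using $u_0, v_0$ and the PDE.

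Combining the three contributions yields $\norm{u - \widetilde u_H}_{L^2(0,T;a)} \lesssim_T H + (\Delta t)^2$. The main obstacle I anticipate is the bookkeeping for the start-up step: the stability bound \eqref{eqn:stability_fully} is only useful if $\widetilde e_1$ is genuinely $O((\Delta t)^2)$ in the $a$-norm and $(\widetilde e_1 - \widetilde e_0)/\Delta t$ is bounded in $L^2$ at order $(\Delta t)^2$, which requires a careful second-order initialization consistent with both initial conditions of \eqref{eqn:qgd}. Everything else — the Taylor remainders, the interpolation error in time, and the passage between the discrete maximum-in-$n$ bound and the continuous $L^2(0,T;a)$ norm — is routine once that initialization is in place.
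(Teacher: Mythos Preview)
Your proposal is correct and matches the paper's approach: the paper does not give a formal proof of Theorem \ref{boss} but relies on the discussion leading to \eqref{eqn:fully-conv}, which is precisely your decomposition into the semi-discrete error of Theorem \ref{semi_boss} plus the time-discretization error $\widetilde e_n = u_{\text{ms}}(t_n) - u_H^n$ bounded via the stability estimate and Taylor truncation of $\mathcal{H}^n$. Your added steps---the time-interpolant $\Pi u_{\text{ms}}$ to pass from nodal bounds to the $L^2(0,T;a)$ norm, and the second-order start-up to control $\widetilde e_0,\widetilde e_1$---fill in exactly the routine details the paper leaves implicit.
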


\section{Numerical experiments} \label{sec:numerics}
In this section, we present several numerical experiments to demonstrate the efficiency of the proposed method. 
We set the computational domain $\Omega = (0,1)^2$. We partition the domain into $100 \times 100$ rectangular elements and refer it as a fine mesh $\mathcal{T}^h$ with mesh size $h = \sqrt{2}/100$. 

In the example below, we solve the QGD model \eqref{eqn:qgd-var} with $f (x_1, x_2) = \sin(\pi x_1)\sin(\pi x_2)$. Terminal time $T = 4.0$ is set and step size $\Delta t$ is chosen subjected to the CFL condition. 
The initial conditions are $u_0 = v_0 = 0$. 
Practical experiments showed that $\Delta t = 10^{-5}$ provides a sufficient and rather sharp choice for the stability with small value of $\alpha$ and high value of contrast. 
To implement the scheme, we set $u_{H}^0 = u_{H}^1 = 0$.
We use the permeability field $\kappa$ with contrast $10^3$ (see Figure \ref{kappa}). 
 \begin{figure}[h]
\centering
\includegraphics[scale = 0.4]{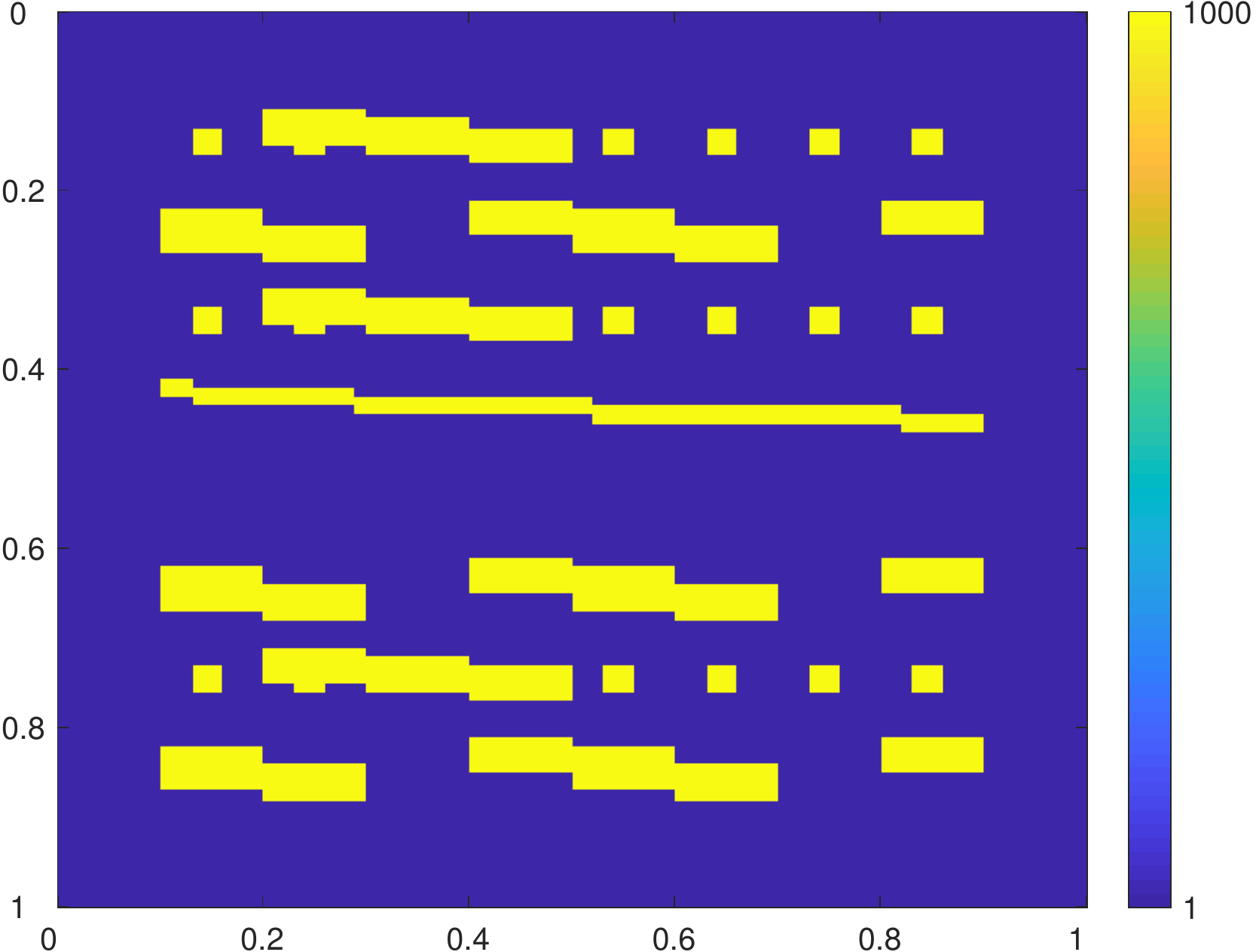}
\caption{Permeability field $\kappa$ with contrast values $10^{3}$.}
\label{kappa}
\end{figure}

We solve the fully discretization \eqref{eqn:fully-dis} and seek $u_{H}^n \in V_{\text{ms}}$. We define the 
corresponding relative $L^2$ and energy errors between the multiscale solution and the exact solution (up to a fine-scale) as follows: 
$$
    e_{L^2} := \frac{\|u(T) - u_{H}^{N_T} \|_s}{\|u(T)\|_s} \quad \text{and} \quad 
    e_a := \frac{\|u(T) - u_{H}^{N_T} \|_a}{\|u(T)\|_a},
$$    
 where $\norm{\cdot}_a = \sqrt{a(\cdot,\cdot)}$ and $\norm{\cdot}_s = \sqrt{s(\cdot,\cdot)}$. 

We present the convergence history in the energy and $L^2$ norms when the coarse mesh size is $H = \sqrt{2}/5$, $\sqrt{2}/10$, and $\sqrt{2}/20$, respectively. 
The number of oversampling layers $m$ is set to be $3,4$, and $6$ in all experiments. The number of multiscale basis functions is $\ell_i = 3$ in each local coarse element $K_i$. 
We test with different values of $\alpha = 0.001, 0.005, 0.1, 0.5, 1, 5,$ and $10$.
The results of $e_{L^2}$ and $e_a$ are shown in Tables \ref{table:1} and \ref{table:2}, respectively. A first-order convergence in energy norm and second-order convergence in $L^2$ norm have been observed as expected; see Figure \ref{l2a01} for illustration. 
 
 \begin{figure}[H]
\centering
\mbox{
\includegraphics[scale = 0.4]{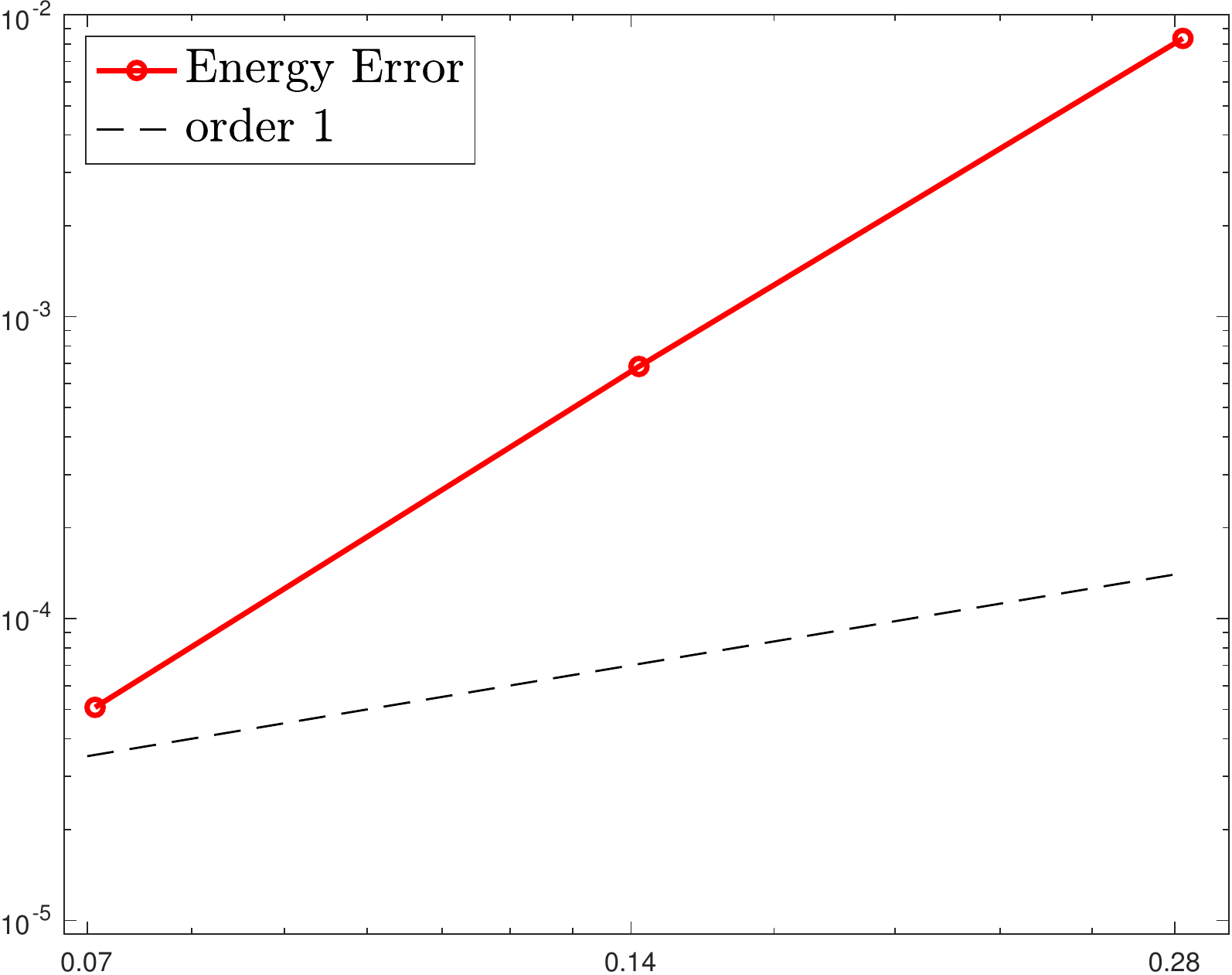}
%\caption{Convergence in energy norm for $\alpha = 0.1$, please note we have taken the log of the error}
%\label{energya01}
%\end{figure}

%\begin{figure}[H]
%\centering
\includegraphics[scale = 0.4]{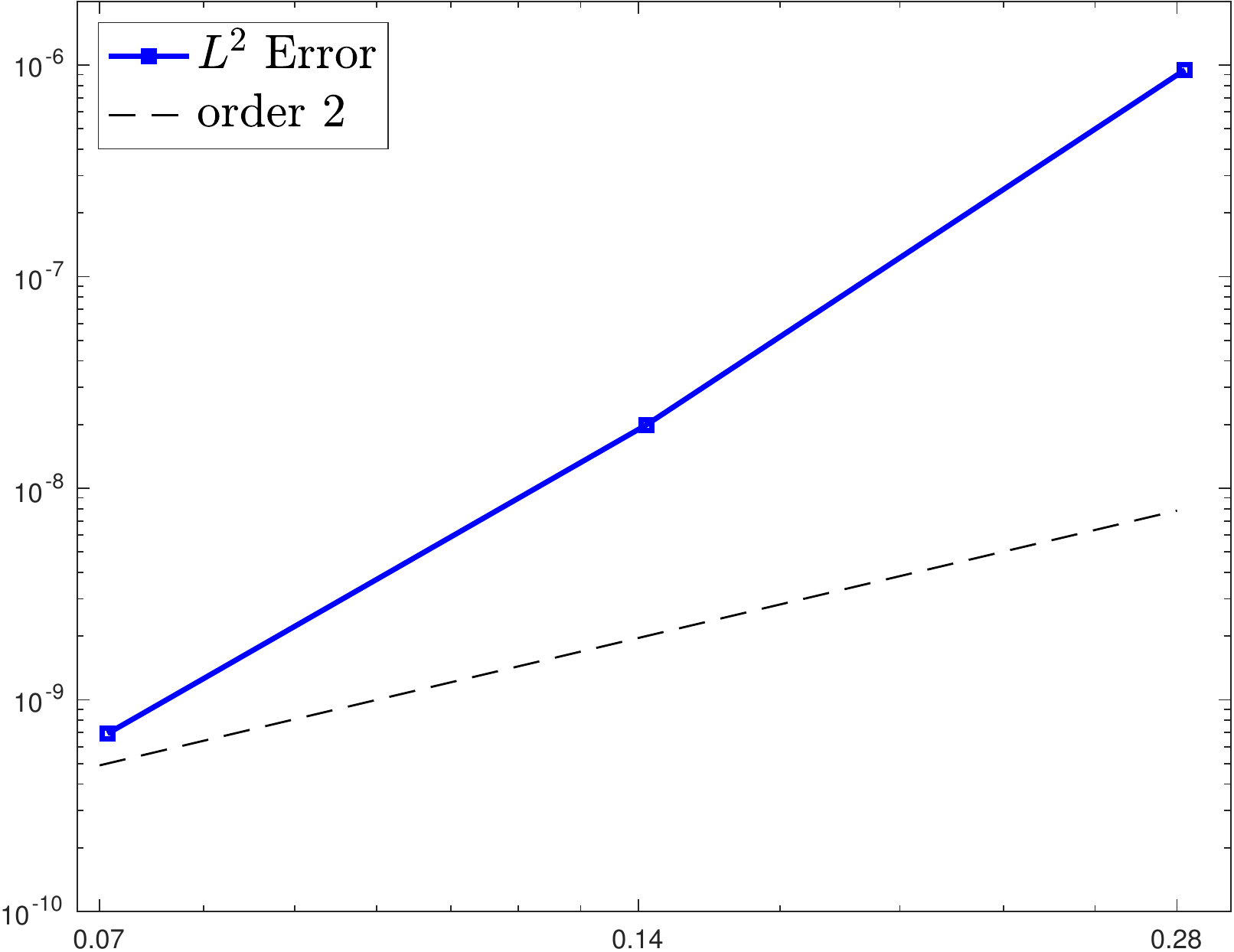}
}
\caption{Convergence history in $e_a$ (left) and $e_{L^2}$ (right) with $\alpha = 0.1$.}
\label{l2a01}
\end{figure}

\begin{table}[H]
\centering
\begin{tabular}{||c c c c c c c c c||} 
\hline
$H$ & $m$ &$\alpha = 10$ & $\alpha = 5$ & $\alpha = 1$ & $\alpha = 0.5$ & $\alpha = 0.1$ & $\alpha = 0.05$ & $\alpha = 0.01$ \\ [0.5ex] 
\hline
$\sqrt{2}/5$ & 3 & 2.07e-03 & 4.85e-05 & 2.09e-05 & 5.40e-06 & 9.49e-07 & 9.49e-07 & 9.49e-07 \\ 
\hline
$\sqrt{2}/10$ & $4$%\approx 3\log(1/10)/\log(1/5)$  
& 9.39e-06 & 2.12e-07 & 1.60e-07 & 4.17e-08 & 1.99e-08 & 1.99e-08 & 1.99e-08 \\
\hline
$\sqrt{2}/20$ & $6$%\approx 4\log(1/20)/\log(1/10)$ 
& 1.95e-07 & 5.38e-09 & 2.45e-09 & 6.92e-10 & 6.92e-10 & 6.92e-10 & 6.92e-10 \\[1ex] 
\hline

\end{tabular}
\caption{Convergence in relative $L^2$ norm for different $\alpha$}
\label{table:1}
\end{table}
 
\begin{table}[H]
\centering
\begin{tabular}{||c c c c c c c c c||} 
\hline
$H$ & $m$ & $\alpha = 10$ & $\alpha = 5$ & $\alpha = 1$ & $\alpha = 0.5$ & $\alpha = 0.1$ & $\alpha = 0.05$ & $\alpha = 0.01$ \\ [0.5ex] 
\hline

$\sqrt{2}/5$  & 3 & 2.08e-02 & 8.76e-03 & 8.54e-03 & 8.55e-03 & 8.53e-03 & 8.53e-03 & 8.53e-03 \\ 
\hline
$\sqrt{2}/10$ & $4$%\approx 3\log(1/10)/\log(1/5)$  
& 1.75e-03 & 6.28e-04 & 6.74e-04 & 6.86e-04 & 6.84e-04 & 6.84e-04 & 6.84e-04 \\
\hline
$\sqrt{2}/20$ & $6$%\approx 4\log(1/20)/\log(1/10)$ 
& 1.89e-04 & 5.19e-05 & 5.11e-05 & 5.09e-05 & 5.08e-05 & 5.08e-05 & 5.08e-05 \\[1ex] 
\hline
\end{tabular}
\caption{Convergence in relative energy norm for different $\alpha$}
\label{table:2}
\end{table}

We also test our algorithm on a problem with time dependent source. In this example, we set $f (x_1, x_2, t) = \sin(\pi t)\sin(\pi x_1)\sin(\pi x_2)$. All the other settings are same with the first example. The convergence in $L_2$ and energy norm are presented in Tables \ref{table:3} and \ref{table:4}. Convergence rate in both norms are observed. 
\begin{table}[H]
\centering
\begin{tabular}{||c c c c c c c c c||} 
\hline
$H$ & $m$ & $\alpha = 10$ & $\alpha = 5$ & $\alpha = 1$ & $\alpha = 0.5$ & $\alpha = 0.1$ & $\alpha = 0.05$ & $\alpha = 0.01$ \\ [0.5ex] 
\hline

$\sqrt{2}/5$  & 3 & 3.00e-01 & 7.83e-01 & 1.89e-01 & 7.41e-03 & 7.57e-03 & 7.11e-03 & 6.76e-03 \\ 
\hline
$\sqrt{2}/10$ & $4$%\approx 3\log(1/10)/\log(1/5)$  
& 1.07e-03 & 3.41e-03 & 8.82e-04 & 4.80e-05 & 4.40e-05 & 4.16e-05 & 3.98e-05 \\
\hline
$\sqrt{2}/20$ & $6$%\approx 4\log(1/20)/\log(1/10)$ 
& 1.03e-05 & 2.70e-05 & 6.52e-06 & 3.46e-07 & 3.17e-07 & 3.00e-07 & 2.87e-07 \\[1ex] 
\hline
\end{tabular}
\caption{Convergence (time dependent source) in relative $L_2$ norm for different $\alpha$}
\label{table:3}
\end{table}

\begin{table}[H]
\centering
\begin{tabular}{||c c c c c c c c c||} 
\hline
$H$ & $m$ & $\alpha = 10$ & $\alpha = 5$ & $\alpha = 1$ & $\alpha = 0.5$ & $\alpha = 0.1$ & $\alpha = 0.05$ & $\alpha = 0.01$ \\ [0.5ex] 
\hline

$\sqrt{2}/5$  & 3 & 2.0198 & 1.5981 & 1.0128 & 0.8306 & 0.8304 & 0.8299 & 0.8295 \\ 
\hline
$\sqrt{2}/10$ & $4$%\approx 3\log(1/10)/\log(1/5)$  
& 0.0656 & 0.0589 & 0.0565 & 0.0557 & 0.0558 & 0.0558 & 0.0558 \\
\hline
$\sqrt{2}/20$ & $6$%\approx 4\log(1/20)/\log(1/10)$ 
& 0.0072 & 0.0048 & 0.0048 & 0.0048 & 0.0048 & 0.0048 & 0.0048 \\[1ex] 
\hline
\end{tabular}
\caption{Convergence (time dependent source) in relative energy norm for different $\alpha$}
\label{table:4}
\end{table}

\section{Concluding remarks} \label{sec:conclusion}
In this work, we have proposed a novel computational multiscale method based on the idea of constraint energy minimization for solving the problem of quasi-gas-dynamics. The spatial discretization is based on CEM-GMsFEM which provides a framework to systematically construct multiscale basis functions for approximating the solution of the model. The multiscale basis functions with locally minimal energy are constructed by employing the techniques of oversampling, which leads to an improved accuracy in the simulations. Combined with the central difference scheme for the time discretization, we have shown that the fully discrete method is stable under a relaxed version of CFL condition and has optimal convergence rates despite the heterogeneities of the media. Numerical results have been presented to illustrate the performance of the proposed method.

\section*{Acknowledgement}

The research of Eric Chung is partially supported by the Hong Kong RGC General Research Fund (Project numbers 14304719 and 14302018) and the CUHK Faculty of Science Direct Grant 2019-20.

\bibliographystyle{abbrv}
\bibliography{references}
\end{document}